\newtheorem{thm}{Theorem}[section]
\newtheorem{lem}[thm]{Lemma}
\newtheorem{prop}[thm]{Proposition}
\newtheorem{cor}[thm]{Corollary}
\newtheorem*{propA}{Proposition A}
\newtheorem*{lemB}{Lemma B}
\theoremstyle{definition}
\newtheorem{definition}[thm]{Definition}
\newtheorem{example}[thm]{Example}
\newtheorem*{question}{Question}
\theoremstyle{remark}
\newtheorem{rem}[thm]{Remark}
\newtheorem*{acknowledgment}{Acknowledgment}
\newcommand\Gal{\mathrm{Gal}}
\newcommand\rankzp{\mathop{\mathrm{rank}_{\mathbb{Z}_p}}}
\newcommand\rankztwo{\mathop{\mathrm{rank}_{\mathbb{Z}_2}}}
\begin{document}

\title{Class number behavior in a two-tiered tower of $\mathbb{Z}_p$-extensions}
\footnote[0]{2020 Mathematics Subject Classification. 11R23}
\author{Tsuyoshi Itoh}
%\date{\today}

\maketitle

\begin{abstract}
Let $k_\infty$ be the cyclotomic $\mathbb{Z}_p$-extension field of an 
algebraic number field $k$.
Moreover, we take a $\mathbb{Z}_p$-extension $K_\infty$ over $k_\infty$.
In this paper, we study the behavior of the $p$-part of the class number of 
certain intermediate fields of $K_\infty /k$.
We also consider the structure of the unramified Iwasawa module of 
$K_\infty / k_\infty$ for several cases.
\end{abstract}

\section{Introduction}\label{introduction}

Throughout this paper, $p$ denotes a prime number.
For an algebraic extension field $\mathbb{K}$ of $\mathbb{Q}$,
we denote by $L (\mathbb{K})$ the maximal unramified abelian pro-$p$ extension
field of $\mathbb{K}$,
and put $X (\mathbb{K}) = \Gal (L (\mathbb{K})/ \mathbb{K})$.
When $\mathbb{K}$ is an algebraic number field (i.e., a finite extension field of
$\mathbb{Q}$), $X (\mathbb{K})$ is isomorphic to
the Sylow $p$-subgroup of the ideal class group of $\mathbb{K}$.

\subsection{Iwasawa's class number formula}
We shall recall Iwasawa's class number formula for a $\mathbb{Z}_p$-extension.
Let $\mathcal{K}$ be a $\mathbb{Z}_p$-extension field of an algebraic number field $k$.
For a non-negative integer $n$, we denote by $\mathcal{K}_n$ its $n$th layer
(the unique intermediate field satisfying $[ \mathcal{K}_n : k ]= p^n$).
Then there are the invariants $\lambda (\mathcal{K}/k)$, $\mu (\mathcal{K}/k)$, $\nu (\mathcal{K}/k)$
(depend only on $\mathcal{K}/k$)
such that
\[ | X (\mathcal{K}_n) | =
p^{\lambda (\mathcal{K}/k) n + \mu (\mathcal{K}/k) p^n + \nu (\mathcal{K}/k)} \]
holds for all sufficiently large $n$
(see, e.g., \cite[Chapter 13]{Was}).
These invariants $\lambda (\mathcal{K}/k)$, $\mu (\mathcal{K}/k)$, $\nu (\mathcal{K}/k)$ are
called the Iwasawa invariants of $\mathcal{K}/k$.

\subsection{Class numbers of intermediate fields of tamely ramified cyclic extensions}
There are several studies on the behavior of the $p$-part of class numbers in a 
tamely ramified cyclic extensions of $p$-power degree.
See, e.g., Ichimura \cite{Ichi}, Fukuda \cite{Fuku}.
Similar studies for more general cases (cyclic extensions which are not contained in a 
$\mathbb{Z}_p$-extension) also exist. 
See, e.g., Ichimura and Sumida-Takahashi \cite{IchiTaka} 
and subsequent papers by them.
It is a remarkable fact that the $p$-part of class number of intermediate fields behaves like 
Iwasawa's class number formula for many cyclic extensions 
not contained in a $\mathbb{Z}_p$-extension. 

Their studies treat extensions over number fields with relatively low degree.
Inspired by their studies, we propose the following:

\begin{question}
Let $F/k$ be a tamely ramified cyclic extension of degree $p^m$. 
Denote by $k_n$ the $n$th layer of the cyclotomic $\mathbb{Z}_p$-extension of $k$.
For a fixed sufficiently large integer $n$,
how the $p$-part of the class number of intermediate fields of
$F k_n /k_n$ behaves like?
\end{question}

Of course, it is not essential to restrict $F/k$ being tamely ramified.
However, we shall focus on the tamely ramified case in the latter part of this paper.

In this paper, we will give a partial answer to this question under a special situation 
that $F$ is an intermediate field of a ``two-tiered tower'' of $\mathbb{Z}_p$-extensions 
satisfying several conditions.
In particular, we also propose a method to construct a tamely ramified cyclic extension 
with arbitrary large degree whose class numbers of intermediate fields behave like 
Iwasawa's class number formula.

\subsection{Our setting and results}
Let $k$ be an algebraic number field.
In the following of this paper, we always denote by $k_\infty$ the cyclotomic
$\mathbb{Z}_p$-extension field of $k$.
(However, we will use $\mathbb{B}_\infty$ instead of $\mathbb{Q}_\infty$.)
We also denote by $k_n$ the $n$th layer of $k_\infty /k$.

Concerning the above question, we consider a $\mathbb{Z}_p$-extension field 
$K_\infty$ over $k_\infty$.
That is, $K_\infty /k_\infty /k$ is a ``two-tiered tower'' of 
$\mathbb{Z}_p$-extensions.
Note that we do not require that $K_\infty$ is Galois over $k$.
However, there exists a cyclic extension $F/k_n$ of degree $p^m$
satisfying $F \subset K_\infty$ and $F \cap k_n = k_\infty$
if $n$ is sufficiently large.
We shall observe the behavior of $| X (F' k_{n'}) |$ for
$n \leq n'$ and $k_n \subset F' \subset F$.

In this paper, we treat the case that $K_\infty / k_\infty$ satisfies the following condition.

\begin{itemize}
\item[(R)] Only finitely many primes of $k_\infty$ ramify in $K_\infty$,
and at least one prime ramifies in $K_\infty / k_\infty$.
\end{itemize}

We remark that a prime not lying above $p$ can ramify in $K_\infty / k_\infty$.
Moreover, it can be occurred that all ramifying primes are not lying above $p$.
At this stage, 
we do not concern whether a prime lying above $p$ ramifies or not.

\begin{definition}
Assume that $K_\infty / k_\infty$ satisfies (R).
In this case, all primes ramified in $K_\infty / k_\infty$ are totally ramified in $K_\infty / K_m$
if $m$ is sufficiently large.
We denote by $e_1$ the minimal $m$ satisfying this property.
\end{definition}

We put $H = \Gal (K_\infty / k_\infty)$.
For a non-negative integer $m$, let $K_m$ be the ``$m$th layer'' of $K_\infty/ k_\infty$
(i.e., the unique intermediate field satisfying $[ K_m : k_\infty ]= p^m$).
We denote by $\Lambda_H$ the completed group ring $\mathbb{Z}_p [[ H ]]$.
We can see that $X (K_\infty)$ is a compact $\Lambda_H$-module.

Note that if $X (k_\infty)$ is finitely generated over $\mathbb{Z}_p$
(i.e., $\mu (k_\infty /k) =0$),
then $X (K_\infty)$ is a finitely generated $\Lambda_H$-module.
On the other hand, if $X (K_\infty)$ is a finitely generated $\Lambda_H$-module,
then all $X (K_m)$ must be finitely generated over $\mathbb{Z}_p$.
(For the proof of these facts, see, e.g., the argument given in the proof of \cite[Theorem 2]{Blo}.)
We also recall that if $k/\mathbb{Q}$ is an abelian extension 
then $\mu (k_\infty /k)=0$ (Ferrero-Washington \cite{FW}).

In this paper, we mainly treat the case that $K_\infty / k_\infty$ satisfies the following condition.

\begin{itemize}
\item[(T)] $X (K_\infty)$ is a finitely generated \textbf{torsion} $\Lambda_H$-module.
\end{itemize}

We remark that there is an example of $K_\infty  / k_\infty$ which satisfies (R) 
and $\mu (k_\infty /k) =0$ but does not satisfy (T) (see Hachimori-Sharifi \cite{HS}).
First, we introduce the following:

\begin{propA}
Assume that $K_\infty / k_\infty$ satisfies the conditions (R) and (T).
For a non-negative integer $m$,
we denote by $C_m$ the $\mathbb{Z}_p$-torsion subgroup of $X (K_m)$
(recall that $X (K_m)$ is finitely generated over $\mathbb{Z}_p$ in this case).
Then there are non-negative integers $\lambda_1$, $\mu_1$, $\lambda_2$ and
an integer $\nu_1$ such that the following holds for all sufficiently large $m$.
\begin{itemize}
\item[(i)] $|C_m| = p^{\lambda_1 n + \mu_1 p^m + \nu_1}$, and
\item[(ii)] $\rankzp X (K_m) = \lambda_2$.
\end{itemize}
\end{propA}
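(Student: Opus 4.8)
The plan is to transfer the problem to the Iwasawa algebra $\Lambda_H \cong \mathbb{Z}_p[[T]]$, where $T = \gamma - 1$ for a fixed topological generator $\gamma$ of $H$, and then to read off the two assertions from the structure theory of the finitely generated torsion $\Lambda_H$-module $X = X (K_\infty)$. Write $\omega_m = (1+T)^{p^m} - 1$. The first task is a control theorem relating $X (K_m)$ to the $H$-coinvariants of $X$: I expect that for $m \geq e_1$ --- once every ramified prime is totally ramified in $K_\infty / K_m$ --- the standard Iwasawa-theoretic machinery (as in \cite[Chapter 13]{Was}, with the number field there replaced by $k_\infty$) describes $X (K_m)$ as a quotient of the form $X /(\nu_{m,e_1} Y + \omega_{e_1} X)$, where $Y \subseteq X$ is the finitely generated submodule generated by $TX$ together with the finitely many inertia elements attached to the ramified primes. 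The purpose of this step is that, up to a kernel and cokernel of bounded order (independent of $m$), $X (K_m)$ agrees with $X / \omega_m X$ both in $\mathbb{Z}_p$-rank and in the order of its $\mathbb{Z}_p$-torsion subgroup.

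Granting this, I would fix a pseudo-isomorphism $X \to E$, where $E = \bigoplus_i \Lambda_H /(p^{a_i}) \oplus \bigoplus_j \Lambda_H /(f_j^{n_j})$ is the elementary module attached to $X$ (the $f_j$ distinguished irreducible). Since the kernel and cokernel of a pseudo-isomorphism are finite, a snake-lemma computation shows that $X / \omega_m X$ and $E / \omega_m E$ have equal $\mathbb{Z}_p$-rank and torsion subgroups whose orders differ by a bounded factor; so it suffices to analyze $E / \omega_m E$ summand by summand. The key dichotomy is whether a distinguished factor $f_j$ divides $\omega_m$. Because $\omega_m \mid \omega_{m+1}$ and each $f_j$ has only finitely many roots, for all large $m$ the set of indices with $f_j \mid \omega_m$ is a fixed set (the ``cyclotomic'' factors, those $f_j$ equal to $T$ or to $\Phi_{p^k}(1+T)$); I will call these the \emph{bad} factors and the rest \emph{good}.

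The summand-by-summand computation then runs as follows. Each $\Lambda_H /(p^{a_i})$ contributes a finite quotient of order $p^{a_i p^m}$, giving $\mu_1 = \sum_i a_i$ and no $\mathbb{Z}_p$-rank. Each good factor $\Lambda_H /(f_j^{n_j})$ contributes a finite quotient of order $p^{n_j (\deg f_j) m + O(1)}$, via the valuation estimate $v_p (\omega_m (\alpha)) = m + O(1)$ at a root $\alpha$ of $f_j$ (here $1 + \alpha$ is not a root of unity), giving a term $n_j (\deg f_j)$ in $\lambda_1$ and again no rank. Each bad factor $\Lambda_H /(f_j^{n_j})$ yields a quotient of $\mathbb{Z}_p$-rank exactly $\deg f_j$ (stable in $m$) together with a torsion subgroup of order $p^{(n_j - 1)(\deg f_j) m + O(1)}$, as one checks directly on the $\mathbb{Z}_p$-basis $1, T, \dots$ of $\Lambda_H /(f_j^{n_j})$ --- for $f_j = T$ the matrix of $\omega_m$ has lowest-degree term $p^m T$. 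Summing, $\rankzp X (K_m)$ stabilizes to $\lambda_2 = \sum_{j \ \mathrm{bad}} \deg f_j$, while the torsion order is $p^{\lambda_1 m + \mu_1 p^m + \nu_1}$ with $\lambda_1 = \sum_{j \ \mathrm{good}} n_j \deg f_j + \sum_{j \ \mathrm{bad}} (n_j - 1) \deg f_j$ and $\nu_1$ absorbing all the $O(1)$ contributions. This is precisely (i) and (ii) (with $m$ in place of the $n$ in the displayed exponent).

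The main obstacle is the first step: making the passage from $X (K_m)$ to $X / \omega_m X$ precise in this two-tiered situation. Unlike the classical case, the base $k_\infty$ is already infinite and $X (K_m)$ is no longer finite, so I cannot simply quote the finite class-number formula; I must instead verify that the inertia correction coming from the (possibly non-$p$-adic) ramified primes, together with the kernel and cokernel of the pseudo-isomorphism, contribute only bounded-order terms and in particular leave the $\mathbb{Z}_p$-rank unchanged. Once this control statement is pinned down --- which is where I would spend the most care --- assertions (i) and (ii) follow from the elementary-module bookkeeping above.
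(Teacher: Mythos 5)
Your first step --- the claim that $X(K_m)$ agrees with $X/\omega_m X(K_\infty)$ up to kernel and cokernel of bounded order, in $\mathbb{Z}_p$-rank and torsion order --- is false, and since the whole elementary-module bookkeeping is applied to $X/\omega_m X(K_\infty)$, the resulting formulas for $\lambda_1$ and $\lambda_2$ are wrong in general. The correct control statement is $X(K_m) \cong X(K_\infty)/\nu_{m,e_1}\mathcal{Y}_{e_1}$, where $\mathcal{Y}_{e_1}$ is generated by $\omega_{e_1}X(K_\infty)$ \emph{and} the inertia elements, and the discrepancy between $\nu_{m,e_1}\mathcal{Y}_{e_1}$ and $\omega_m X(K_\infty)=\nu_{m,e_1}\omega_{e_1}X(K_\infty)$ is a quotient of $\mathcal{Y}_{e_1}/\omega_{e_1}X(K_\infty)$. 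In the classical setting one can ignore the analogous discrepancy only because $X/Y_e\cong X_e$ is \emph{finite} there; here $X(K_\infty)/\mathcal{Y}_{e_1}\cong X(K_{e_1})$ is merely finitely generated over $\mathbb{Z}_p$, the inertia part of $\mathcal{Y}_{e_1}$ can have positive $\mathbb{Z}_p$-rank, and multiplication by $\nu_{m,e_1}$ on it produces kernels that are not even finite. Concretely, take $X(K_\infty)\cong\mathbb{Z}_p$ with trivial $H$-action, two ramified primes, and $X(k_\infty)=0$, so that $\mathcal{Y}_0=X(K_\infty)$ (this is exactly the shape of the situation in Proposition \ref{prop_Q} and Example \ref{ex_GC}: there $X(K_\infty)$ surjects onto $\mathbb{Z}_p^{d-1}$ with trivial $H$-action --- the module $\mathcal{F}$ in the proof of Proposition \ref{lower_lambda_1} --- while $X(k_\infty)$ is trivial). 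Then $\omega_m$ kills $X(K_\infty)$, so $X(K_\infty)/\omega_m X(K_\infty)\cong\mathbb{Z}_p$ for every $m$ and your recipe reads off $(\lambda_1,\lambda_2)=(0,1)$ from the single ``bad'' factor $T$; but $\nu_{m,0}$ acts as $p^m$, so $X(K_m)\cong X(K_\infty)/\nu_{m,0}\mathcal{Y}_0\cong\mathbb{Z}/p^m\mathbb{Z}$, i.e.\ the true invariants are $(\lambda_1,\lambda_2)=(1,0)$.

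The deeper point your approach misses is that the splitting of $\lambda(X(K_\infty))$ into $\lambda_1+\lambda_2$ is \emph{not} determined by the $\Lambda_H$-module $X(K_\infty)$ alone (only the sum is, as the paper remarks); it depends on the inertia data encoded in $\mathcal{Y}_{e_1}$. The paper's own results exhibit both extremes for modules of the same abstract shape ($\mathbb{Z}_p$-free with essentially trivial $H$-action): in Proposition \ref{prop_imag2} one has $(\lambda_1,\lambda_2)=(0,r-1)$, whereas in Proposition \ref{prop_Q} one has $\lambda_1>0$ with $\lambda_2$ expected to vanish. Accordingly, the paper's proof never passes to $\omega_m$-coinvariants: it keeps $\mathcal{Y}_{e_1}$ throughout, chooses $e_2>e_1$ so that $\nu_{e_2,e_1}\mathcal{Y}_{e_1}/\nu_{m,e_1}\mathcal{Y}_{e_1}$ is finite for $m>e_2$, and uses the exact sequence
\[ 0 \to \nu_{e_2,e_1}\mathcal{Y}_{e_1}/\nu_{m,e_1}\mathcal{Y}_{e_1} \to X(K_\infty)/\nu_{m,e_1}\mathcal{Y}_{e_1} \to X(K_\infty)/\nu_{e_2,e_1}\mathcal{Y}_{e_1} \to 0, \]
so that $\lambda_1,\mu_1$ are the Iwasawa invariants of the torsion module $\nu_{e_2,e_1}\mathcal{Y}_{e_1}$ (the growing finite left-hand term) and $\lambda_2$ is the $\mathbb{Z}_p$-rank of the \emph{fixed} right-hand quotient. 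Your summand-by-summand computation is fine as a computation of $E/\omega_m E$ for an elementary module $E$, but it is computing the invariants of the wrong object; to repair the argument you must run that bookkeeping on $\nu_{e_2,e_1}\mathcal{Y}_{e_1}$ and treat the quotient $X(K_\infty)/\nu_{e_2,e_1}\mathcal{Y}_{e_1}$ as a constant, which is precisely the paper's proof.
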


Here, for a $\mathbb{Z}_p$-module $A$, we denote by $\rankzp A$
the dimension of the $\mathbb{Q}_p$-vector space $A \otimes_{\mathbb{Z}_p} \mathbb{Q}_p$.
Proposition A seems almost well known (see Section \ref{sec_propA}).

\begin{definition}
Let $m_0$ be the minimal non-negative integer which satisfies $m_0 \geq e_1$, and
both (i), (ii) of Proposition A hold for all $m \geq m_0$.
\end{definition}

Next, we give a formula which describes the $p$-part of the class number
of certain intermediate fields of $K_\infty/ k$.
Recall that we do not assume that $K_\infty /k$ is a Galois extension.
However, as already noted a similar fact above,
there is an integer $n$ such that $K_m / k_n$ is an abelian extension
since $K_m / k_\infty$ is a finite abelian extension
($\Gal (K_m / k_n)$ is isomorphic to $\mathbb{Z} / p^m \times \mathbb{Z}_p$).

\begin{thm}\label{main_thm}
Assume that $K_\infty / k_\infty$ satisfies the conditions (R) and (T).
Fix an integer $m_1$ greater than $m_0$.
Let $n^*$ be the minimal non-negative integer such that $K_{m_1} / k_{n^*}$ is
an abelian extension.
Take a cyclic extension $F_{m_1} / k_{n^*}$ of degree $p^{m_1}$ such that
$F_{m_1} k_\infty = K_{m_1}$.
For an integer $m$ which satisfies $0 \leq m \leq m_1$, let $F_m$ be the
unique intermediate field of $F_{m_1} / k_{n^*}$ whose degree over $k_{n^*}$
is $p^m$.
Let $\lambda_1$, $\mu_1$, $\lambda_2$ be the non-negative
integers appeared in Proposition A.
Then there are an integer $\nu$ (depend only on $K_\infty /k$),
and a non-negative integer $n_0$ (depend on $F_{m_1} /k_{n^*}$)
such that
\[ |X (F_m k_n)| = p^{\lambda_1 m + \mu_1 p^m + \lambda_2 n + \nu} \]
holds for all integers $m,n$ satisfying
$m_0 \leq m \leq m_1$ and $n_0 \leq n$.
\end{thm}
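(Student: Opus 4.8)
The plan is to fix $m$ in the range $m_0 \le m \le m_1$, read off the growth of $|X(F_m k_n)|$ in $n$ from Iwasawa's class number formula, and then identify the resulting invariants with those of Proposition A. The starting point is a structural observation that makes the $n$-direction cyclotomic: since $k \subseteq F_m$ and $k_\infty = k\mathbb{B}_\infty$, we have $F_m k_\infty = F_m\mathbb{B}_\infty$, so $K_m = F_m k_\infty$ is exactly the cyclotomic $\mathbb{Z}_p$-extension of $F_m$, and $F_m k_n = F_m\mathbb{B}_n$ is its $n$th layer (up to the index shift coming from $F_m\cap\mathbb{B}_\infty$). Hence Iwasawa's class number formula applies to $K_m/F_m$, giving invariants $\lambda(K_m/F_m)$, $\mu(K_m/F_m)$, $\nu(K_m/F_m)$ with
\[ |X(F_m k_n)| = p^{\lambda(K_m/F_m)\,n + \mu(K_m/F_m)\,p^n + \nu(K_m/F_m)} \]
for all sufficiently large $n$ (the threshold depending a priori on $m$). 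The associated Iwasawa module is precisely $X(K_m)$, now viewed over $\mathbb{Z}_p[[\Gal(K_m/F_m)]]$.

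Next I would pin down two of the three invariants. Since $K_\infty/k_\infty$ satisfies (T), each $X(K_m)$ is finitely generated over $\mathbb{Z}_p$; a $\mathbb{Z}_p[[\Gal(K_m/F_m)]]$-module that is finitely generated over $\mathbb{Z}_p$ is automatically finitely generated and torsion with vanishing $\mu$-invariant, and its $\lambda$-invariant equals its $\mathbb{Z}_p$-rank. Thus $\mu(K_m/F_m)=0$ and, by Proposition~A(ii), $\lambda(K_m/F_m)=\rankzp X(K_m)=\lambda_2$. This yields $|X(F_m k_n)| = p^{\lambda_2 n + \nu(K_m/F_m)}$ for $n\gg 0$ and reduces the theorem to the single identity
\[ \nu(K_m/F_m) = \lambda_1 m + \mu_1 p^m + \nu \qquad (m_0 \le m \le m_1), \]
with $\nu$ independent of $m$.

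To reach this I would use the genus-theoretic description of the layers of $K_m/F_m$. Writing $\gamma$ for a topological generator of $\Gal(K_m/F_m)$, $T=\gamma-1$, and $\omega_n,\nu_{n,0}$ for the usual distinguished elements, the standard formula (valid once all primes of $F_m$ above $p$ are totally ramified in $K_m$; see, e.g., \cite[Chapter 13]{Was}) expresses $X(F_m k_n)$ as a quotient of $X(K_m)$ by the submodule $\omega_n X(K_m) + \nu_{n,0}Z$, where $Z$ encodes the ramification classes of the primes above $p$. Decomposing $X(K_m)\cong \mathbb{Z}_p^{\lambda_2}\oplus C_m$ and letting $n\to\infty$, the finite part $C_m$ contributes its full order $|C_m|=p^{\lambda_1 m + \mu_1 p^m + \nu_1}$ (Proposition~A(i)), because $\gamma^{p^n}$ acts trivially on $C_m$ for large $n$, while the free part together with $Z$ contributes an $n$-independent factor $p^{\delta_m}$ governed by the valuations of the eigenvalues of $T$ on $X(K_m)\otimes_{\mathbb{Z}_p}\mathbb{Q}_p$ and by the classes in $Z$. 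Consequently $\nu(K_m/F_m) = \log_p|C_m| + \delta_m = \lambda_1 m + \mu_1 p^m + \nu_1 + \delta_m$.

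The main obstacle is therefore to prove that $\delta_m$ is eventually constant in $m$, so that $\nu := \nu_1 + \delta_m$ is well defined and depends only on $K_\infty/k$; this is where I expect the real work to lie. One must control, uniformly in $m\ge m_0$, both the valuations of the characteristic roots of the cyclotomic action on the free quotient of $X(K_m)$ and the contribution of the primes above $p$, and show they stabilize. I would attack this by tracking these data through the $\Lambda_H$-module $X(K_\infty)$, using that for $m\ge m_0\ge e_1$ the ramification in $K_\infty/K_m$ is total and the structural invariants of $X(K_m)$ have already stabilized in the sense of Proposition~A, so as to exhibit the free part, its cyclotomic action, and $Z$ as induced from $m$-independent data. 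Finally, since $m$ ranges over the finite set $\{m_0,\dots,m_1\}$, the a priori $m$-dependent thresholds $n_0(m)$ from the first step may be replaced by their maximum $n_0$, furnishing a single bound valid for all admissible $m$ and disposing of the uniformity in $n$.
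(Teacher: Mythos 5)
Your setup is sound and in fact parallels the paper closely: applying Iwasawa's class number formula in the $n$-direction, noting that (T) forces $\mu(K_m/F_m)=0$ and $\lambda(K_m/F_m)=\rankzp X(K_m)=\lambda_2$ for $m\geq m_0$, and splitting off the finite part $C_m$ (whose order is controlled by Proposition A(i)) are all steps the paper also takes. But the theorem's entire content beyond these reductions is the claim you defer in your last paragraph: that the residual constant $\delta_m$ (the contribution of the free quotient of $X(K_m)$ together with the inertia data $Z$) is \emph{independent} of $m$ on the range $m_0\leq m\leq m_1$. You explicitly flag this as ``where the real work lies'' and offer only a plan (``exhibit the free part, its cyclotomic action, and $Z$ as induced from $m$-independent data''), so the proof is not complete: without this step one only gets $|X(F_m k_n)|=p^{\lambda_2 n+\nu(K_m/F_m)}$ with an uncontrolled $m$-dependence in the constant, which is strictly weaker than the theorem.

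The paper closes exactly this gap with a concrete mechanism that your sketch is missing. For each $m$ it takes Washington's submodule $Y^{(m)}\subset X(K_m)$ (generated by the inertia differences $\gamma^{(m)}_i$ and $(\gamma-1)X(K_m)$) with $X(K_m)/\nu_N Y^{(m)}\cong X(\mathcal{F}_{m,N})$, passes to $\overline{X}(K_m)=X(K_m)/C_m$ and $\overline{Y}^{(m)}$, and proves two compatibilities: (a) the generators $\gamma^{(m)}_i$ are literally the restrictions of the $\gamma^{(m_1)}_i$, with primes that merge under going down contributing differences lying in $(\gamma-1)X(K_m)$, so the image of $Y^{(m_1)}$ in $X(K_m)$ \emph{is} $Y^{(m)}$; and (b) since at least one prime is totally ramified in $K_{m_1}/K_m$ (here $m\geq m_0\geq e_1$ is used), the restriction $X(K_{m_1})\to X(K_m)$ is surjective, hence induces a surjection $\overline{X}(K_{m_1})\to\overline{X}(K_m)$ between free $\mathbb{Z}_p$-modules of the same rank $\lambda_2$, which is therefore an isomorphism carrying $\overline{Y}^{(m_1)}$ onto $\overline{Y}^{(m)}$. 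Consequently $\overline{X}(K_m)/\nu_N\overline{Y}^{(m)}\cong\overline{X}(K_{m_1})/\nu_N\overline{Y}^{(m_1)}$ for every $m$ in the range, i.e.\ your $\delta_m$ is the \emph{same} number for all such $m$; the common value is then pinned down by applying Iwasawa's formula once, at the bottom level $m=m_0$, and the independence of $\nu$ from the choice of $F_{m_1}$ follows because different choices of $F'_{m_0}$ agree after finitely many layers. Some version of this restriction argument (or an equivalent descent through $X(K_\infty)$) is indispensable, and supplying it is what would turn your outline into a proof.
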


The above theorem can be proven by only using the tools
given in the proof of Iwasawa's class number formula (see, e.g., \cite[Chapter 13]{Was}).
In Section \ref{sec_proof}, we will give remarks on Proposition A and prove this theorem.

In Theorem \ref{main_thm}, if every prime lying above $p$ is unramified in
$K_\infty / k_\infty$, then $F_{m_1} k_n /k_n$ is tamely ramified
for all sufficiently large $n$
(see, e.g., \cite[Exercise 13.11]{Was}).
In this case, we can obtain a tamely ramified cyclic extension of degree
$p^{m_1}$ whose behavior of the $p$-part of the class number of intermediate fields
is similar to that of Iwasawa's class number formula.
We also note that $m_1$ can be taken arbitrary large.

\begin{cor}\label{main_cor}
Let the notation be as in Theorem \ref{main_thm}.
Assume that $K_\infty / k_\infty$ satisfies the conditions (R) and (T), and
every prime lying above $p$ is unramified in $K_\infty / k_\infty$.
Then, for every fixed sufficiently large integer $n$,
$F_{m_1} k_n /k_n$ is tamely ramified and the formula
\[ |X (F_m k_n)| = p^{\lambda_1 m + \mu_1 p^m + \nu'} \]
holds for all $m$ satisfying $m_0 \leq m \leq m_1$.
The above constant $\nu'$ is independent on $m$.
\end{cor}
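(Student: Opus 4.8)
The plan is to obtain the corollary as a direct specialization of Theorem \ref{main_thm}: the extra hypothesis that no prime above $p$ ramifies in $K_\infty/k_\infty$ is used only to guarantee tame ramification, whereas the class-number formula is read off simply by holding $n$ fixed. Indeed, the tame-ramification assertion has already been recorded in the paragraph preceding the statement, following \cite[Exercise 13.11]{Was}: under condition (R) the finitely many primes ramifying in $K_\infty/k_\infty$ all lie above rational primes distinct from $p$, and since $F_{m_1} k_n/k_n$ is a cyclic $p$-extension whose ramification is concentrated at such primes for $n$ large, no wild ramification can occur. Thus it suffices to produce the displayed formula.

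To this end I would fix an integer $n$ that is large enough both to exceed the threshold $n_0$ of Theorem \ref{main_thm} and to make $F_{m_1} k_n/k_n$ tamely ramified. For such $n$ Theorem \ref{main_thm} gives
\[ |X (F_m k_n)| = p^{\lambda_1 m + \mu_1 p^m + \lambda_2 n + \nu} \]
for every $m$ with $m_0 \leq m \leq m_1$. The decisive point is that, with $n$ now fixed, the exponent $\lambda_2 n + \nu$ is a constant in $m$; setting $\nu' := \lambda_2 n + \nu$ immediately yields
\[ |X (F_m k_n)| = p^{\lambda_1 m + \mu_1 p^m + \nu'}, \]
with $\nu'$ depending only on the chosen $n$ and on $K_\infty/k$, hence independent of $m$, as claimed.

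Because the corollary is essentially a restatement of Theorem \ref{main_thm} for a fixed value of $n$, I do not expect a genuine obstacle. The only points requiring care are the simultaneous choice of $n$ meeting both thresholds (that of the formula and that beyond which $F_{m_1} k_n/k_n$ is tamely ramified) and the bookkeeping check that absorbing $\lambda_2 n$ into $\nu'$ leaves no residual $m$-dependence; both are routine once Theorem \ref{main_thm} is in hand.
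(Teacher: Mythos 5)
Your proposal is correct and matches the paper's (implicit) argument exactly: the paper derives the corollary from Theorem \ref{main_thm} by invoking \cite[Exercise 13.11]{Was} for tame ramification at large $n$ and absorbing the now-constant term $\lambda_2 n + \nu$ into $\nu'$. No gaps.
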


We shall give more remarks on the above theorem.
When $K_\infty / k$ is a $\mathbb{Z}_p^2$-extension, there is a
$\mathbb{Z}_p$-extension $F'_\infty / k_n$ such that $F'_\infty \subset K_\infty$
and the $m_1$th layer of $F'_\infty / k_n$ is $F_{m_1} k_n$.
In this situation, the Iwasawa invariants
$\lambda (F'_\infty / k_n)$, $\mu (F'_\infty / k_n)$ of $F'_\infty / k_n$ also exist.
However, the definition of our invariants $\lambda_1$, $\mu_1$ is slightly
different from that of the usual Iwasawa invariants.
Similarly, the definition of our invariants is also different from the
invariants stated in Corollary 3.4 of Lei's paper \cite{Lei} for 
$\mathbb{Z}_p \rtimes \mathbb{Z}_p$-extensions.
We will discuss about them and mention more several topics concerning known results 
in Section \ref{sec_remarks}.

Section \ref{sec_examples} is another main contents of this paper.
Concerning the above question,
we will give examples satisfying (R) and (T)
for the case that $K_\infty / k_\infty$ is unramified at
every prime lying above $p$.
(Although we do not deal in detail here, the other case seems also significant.
See Section \ref{sec_Galois_exam}.)
We can find an example satisfying $\lambda_1 = \mu_1 = \lambda_2 =0$ but $\nu \neq 0$
from a known result (Example \ref{ex_MO}).
We also give a lower bound of $\lambda_1$ (Proposition \ref{lower_lambda_1}).
To obtain this bound, we use the fact that the primes lying above $p$ are unramified in 
$K_\infty /k_\infty$.
We can construct an example $K_\infty / \mathbb{Q}$
such that $\lambda_1$ is larger than 
any given integer under the strict assumption that Greenberg's conjecture
holds for every intermediate algebraic number field (Example \ref{ex_GC}).
Alternatively, by using the following result, 
we can obtain an example with $\lambda_1 >0$ under a somewhat mild condition.

\begin{prop}\label{prop_Q}
We put $p=2$, and
we denote by $\mathbb{B}_\infty / \mathbb{Q}$ the cyclotomic $\mathbb{Z}_2$-extension.
Let $q_1$, $q_2$ be prime numbers satisfying $q_1 \equiv q_2 \equiv 7 \pmod{8}$.
Assume also that the number of primes of $q_1$ in $\mathbb{B}_\infty$
coincides with that of $q_2$.
Let $S$ be the set of primes of $\mathbb{B}_\infty$ lying above $\{ q_1, q_2 \}$.
Then the following holds:\\
(i) There is a $\mathbb{Z}_2$-extension $K_\infty / \mathbb{B}_\infty$ which is
unramified outside $S$ and satisfies $K_1 = \mathbb{B}_\infty (\sqrt{q_1 q_2})$.
(This $K_\infty / \mathbb{B}_\infty$ satisfies (R).) \\
(ii) Take $K_\infty$ as stated in (i).
If the Iwasawa $\lambda$-invariant of the
cyclotomic $\mathbb{Z}_2$-extension of $\mathbb{Q} (\sqrt{q_1 q_2})$ is zero,
then $K_\infty / \mathbb{B}_\infty$ satisfies (T), and moreover $\lambda_1 >0$.
\end{prop}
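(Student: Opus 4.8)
The plan is to treat (i) and (ii) separately, the common thread being that over $\mathbb{B}_\infty$ the tame primes $q_1,q_2$ behave like primes above $p$ do over a number field. Since $q_i\equiv 7\pmod 8$ gives $v_2(q_i-1)=1$ and $q_1q_2\equiv 1\pmod 8$, the field $F=\mathbb{Q}(\sqrt{q_1q_2})$ is real with $2$ split, so $K_1=\mathbb{B}_\infty(\sqrt{q_1q_2})/\mathbb{B}_\infty$ is unramified at every prime above $2$ and ramified exactly at the primes above $q_1,q_2$; thus $K_1$ is unramified outside $S$. The decisive local point is that for $w\in S$ the residue field of $\mathbb{B}_{\infty,w}$ is the $\mathbb{Z}_2$-extension of a finite field and hence contains $\mu_{2^\infty}$; therefore the maximal abelian pro-$2$ extension of the local field $\mathbb{B}_{\infty,w}$ is procyclic $\cong\mathbb{Z}_2$ and totally (tamely) ramified, so the local inertia group $I_w$ is isomorphic to $\mathbb{Z}_2$. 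This is what makes an infinitely tamely ramified $\mathbb{Z}_2$-extension possible at all.

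For (i) I would pass to the maximal abelian pro-$2$ extension $M_S/\mathbb{B}_\infty$ unramified outside $S$ and its Galois group $\mathfrak{X}_S$, which sits in an exact sequence $\bigoplus_{w\in S} I_w\to\mathfrak{X}_S\to X(\mathbb{B}_\infty)\to 0$ whose kernel is the image of the global units, with $X(\mathbb{B}_\infty)$ finitely generated over $\mathbb{Z}_2$ by Ferrero--Washington. Using the $\mathbb{Z}_2$-freeness of each $I_w$ I would construct a continuous surjection $\phi\colon\mathfrak{X}_S\twoheadrightarrow\mathbb{Z}_2$ whose reduction modulo $2$ is the quadratic character cutting out $K_1$ (that is, nontrivial on every $I_w$), and set $K_\infty:=M_S^{\ker\phi}$; by construction $K_\infty/\mathbb{B}_\infty$ is a $\mathbb{Z}_2$-extension unramified outside $S$ with first layer $K_1$, and it satisfies (R) because $S$ is finite and $q_1$ ramifies. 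The hard part of (i) is the well-definedness of $\phi$: the quadratic class of $\sqrt{q_1q_2}$ must lie in the $\mathbb{Z}_2$-free direction of $\mathfrak{X}_S$, i.e.\ the reciprocity (Hilbert-symbol) relation coming from the global units must vanish. This is exactly where the hypotheses $q_i\equiv 7\pmod 8$ and ``equal number of primes above $q_1$ and $q_2$'' are consumed: they balance the local contributions at the two sets of primes so that the square root of the product $q_1q_2$ (rather than of a single $q_i$) is the class that lifts.

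For the torsion statement in (ii) I would use that $K_1=\mathbb{B}_\infty(\sqrt{q_1q_2})$ is precisely the cyclotomic $\mathbb{Z}_2$-extension $F_\infty$ of $F=\mathbb{Q}(\sqrt{q_1q_2})$, so that $X(K_1)$ is the classical Iwasawa module of $F$. Since $F/\mathbb{Q}$ is abelian we have $\mu(F_\infty/F)=0$ by Ferrero--Washington, and the hypothesis $\lambda(F_\infty/F)=0$ then forces $X(K_1)$ to be finite. Now $K_\infty/K_1$ is a $\mathbb{Z}_2$-extension in which only the (finitely many, finitely split) primes above $q_1,q_2$ ramify, totally from the $e_1$-th layer on; the Iwasawa control theorem therefore gives that the $\Gal(K_\infty/K_1)$-coinvariants of $X(K_\infty)$ are finite. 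Writing $\Lambda'=\mathbb{Z}_2[[\Gal(K_\infty/K_1)]]\cong\mathbb{Z}_2[[T]]$, finiteness of $X(K_\infty)/TX(K_\infty)$ yields both finite generation (topological Nakayama) and $\Lambda'$-rank $0$ (a free summand would make the quotient infinite), so $X(K_\infty)$ is finitely generated torsion over $\Lambda'$, equivalently over $\Lambda_H$. That is condition (T); note that finite generation over $\Lambda_H$ was in any case automatic from $\mu(\mathbb{B}_\infty/\mathbb{Q})=0$ and the remark recalled before Proposition A.

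Finally, for $\lambda_1>0$ I would analyse the finite parts $C_m$ appearing in Proposition A. Via the identification of $X(K_m)$ with the cyclotomic Iwasawa module of the number field $F_m$ (the $m$-th layer of the non-cyclotomic tower built in (i)), the growth of $|C_m|$ is governed by the totally ramified tame primes above $q_1,q_2$ in $K_\infty/K_1$. I would apply the ambiguous class number formula along $K_\infty/K_1$---equivalently invoke the general lower bound of Proposition \ref{lower_lambda_1}, whose hypothesis that the primes above $p$ be unramified is met here---to produce an unbounded, in fact linear in $m$, contribution to $|C_m|$. The role of $\lambda(F_\infty/F)=0$ is to keep $X(K_1)$ finite and $\lambda_2,\mu_1,\nu_1$ under control, so that this ramification-induced growth is visible precisely in the linear term $\lambda_1 m$ rather than being absorbed elsewhere; together with the two distinct ramified rational primes this gives $\lambda_1>0$. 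I expect this last step---separating the ramification contribution into the $\lambda_1$-term and checking that the compensating unit and decomposition terms do not cancel it---to be the most delicate point of the whole argument.
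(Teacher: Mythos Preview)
Your outline has the right strategic shape, but there is a genuine gap in the torsion step of (ii), and both (i) and the $\lambda_1>0$ step are made harder than necessary.

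\medskip

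\textbf{Part (i).} You correctly identify the obstruction: one must check that the quadratic character cutting out $\mathbb{B}_\infty(\sqrt{q_1q_2})$ lifts to a $\mathbb{Z}_2$-valued character of $\mathfrak{X}_S=X_S(\mathbb{B}_\infty)$. But you stop at ``this is where the hypotheses are consumed'' without carrying it out. The paper bypasses the whole reciprocity computation by citing structural results (\cite{IMO}, \cite{I18}) which give that $X_S(\mathbb{B}_\infty)$ is a \emph{free} $\mathbb{Z}_2$-module of rank $r$ (where $r\geq 2$ is the number of primes above each $q_i$). Freeness makes every mod-$2$ character lift, so the existence of $K_\infty$ is immediate.

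\medskip

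\textbf{Part (ii), condition (T): the gap.} Your claim that ``the Iwasawa control theorem gives that the $\Gal(K_\infty/K_1)$-coinvariants of $X(K_\infty)$ are finite'' is not justified. The natural surjection
\[
X(K_\infty)_{H'}=X(K_\infty)/(h^2-1)X(K_\infty)\;\twoheadrightarrow\;X(K_1)
\]
has kernel generated by the inertia elements $\gamma'_i$ attached to the $2r\geq 4$ ramified primes of $K_1$; there is no a priori reason these have finite order in the coinvariants, so finiteness of $X(K_1)$ does not propagate upward. Shifting the base to $K_1$ therefore loses exactly the information you need. The paper instead works over the base $\mathbb{B}_\infty$ and applies Lemma~B. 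The crucial point is that $X(\mathbb{B}_\infty)=0$, so in the standard description $X(K_m)\cong X(K_\infty)/\nu_{m,0}\mathcal{Y}_0$ one has $\mathcal{Y}_0=X(K_\infty)$; hence
\[
X(K_1)\;\cong\;X(K_\infty)/(h+1)X(K_\infty),
\]
and finiteness of this quotient by the distinguished element $h+1$ forces $X(K_\infty)$ to be $\Lambda_H$-torsion. Equivalently: Lemma~B with $m=1$ reads $\rankztwo X(K_1)-\rankztwo X(\mathbb{B}_\infty)=0<2^1-1$.

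\medskip

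\textbf{Part (ii), $\lambda_1>0$: simpler than you fear.} Once (T) is known and all primes of $S$ are totally ramified (which follows from $K_1=\mathbb{B}_\infty(\sqrt{q_1q_2})$ being ramified at every prime of $S$), Proposition~\ref{lower_lambda_1} applies with $d=\rankztwo X_S(\mathbb{B}_\infty)=r$ and $\rankztwo X(\mathbb{B}_\infty)=0$, giving
\[
\lambda_1\;\geq\;r-1\;\geq\;1
\]
in one line. There is no need for an ambiguous class number computation, and no ``delicate separation'' of the ramification contribution from other terms: the proposition already isolates the bound on $\lambda_1$ directly. The concern you flag as ``the most delicate point of the whole argument'' does not arise.
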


In the situation of the above proposition, it is expected that $\lambda_2 = 0$  
(see Section \ref{sec_GC}).
We can also obtain examples satisfying $\lambda_2 >0$ by using the following proposition 
(see also Section \ref{sec_Galois_exam} for preceding results).

\begin{prop}\label{prop_imag}
We put $p=2$.
Let $m$ be a positive odd square-free integer, and put $k = \mathbb{Q} (\sqrt{-m})$.
We assume that $X (k_\infty)$ is not trivial and $\mathbb{Z}_2$-torsion free.
Let $q$ be a prime number which does not divide $m$ and satisfies
$q \equiv 3 \pmod{8}$.
Let $S$ be the set of primes of $k_\infty$ lying above $q$.
Then the following holds:\\
(i) There is a $\mathbb{Z}_2$-extension $K_\infty / k_\infty$ which is
unramified outside $S$ and satisfies $K_1 = k_\infty (\sqrt{-q})$.
(This $K_\infty / k_\infty$ satisfies (R).) \\
(ii) Take $K_\infty$ as stated in (i).
If the Iwasawa $\lambda$-invariant of
the cyclotomic $\mathbb{Z}_2$-extension of $\mathbb{Q} (\sqrt{m q})$ is zero,
then $K_\infty / k_\infty$ satisfies (T), and moreover $\lambda_2 >0$.
\end{prop}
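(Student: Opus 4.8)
The plan is to treat (i) and (ii) separately: for (i) I would first pin down the ramification of the prescribed first layer and then build the $\mathbb{Z}_2$-extension, while for (ii) the assertion $\lambda_2>0$ is short and the verification of (T) is the real work.

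For (i), I would begin by checking that $k_\infty(\sqrt{-q})/k_\infty$ is a quadratic extension ramified exactly at $S$. Since $q\equiv 3\pmod 4$, the field $\mathbb{Q}(\sqrt{-q})$ has discriminant $-q$ and is ramified only at $q$; as $q\nmid 2m$ and $k_\infty/\mathbb{Q}$ is ramified only at primes dividing $2m$, the relative extension $k_\infty(\sqrt{-q})/k_\infty$ ramifies only at the primes in $S$. Moreover $\sqrt{-q}\notin k_\infty$, because $\Gal(k_\infty/\mathbb{Q})\cong \mathbb{Z}/2\times\mathbb{Z}_2$ has exactly three quadratic subfields $\mathbb{Q}(\sqrt{-m})$, $\mathbb{Q}(\sqrt 2)$, $\mathbb{Q}(\sqrt{-2m})$, none of which equals $\mathbb{Q}(\sqrt{-q})$ when $q\nmid 2m$. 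I would also record that, since $\sqrt{-m}\in k_\infty$, one has $k_\infty(\sqrt{-q})=k_\infty(\sqrt{mq})$, so the prescribed first layer is the base change of the \emph{real} field $\mathbb{Q}(\sqrt{mq})$; this is what ties the later hypothesis to $\mathbb{Q}(\sqrt{mq})$.

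Still for (i), the key local input is at a prime $\mathfrak q\in S$. As $q\neq 2$ is finitely decomposed in $k_\infty/k$, the set $S$ is finite and $(k_\infty)_{\mathfrak q}$ is the unramified $\mathbb{Z}_2$-extension of a $q$-adic field; its residue field is $\bigcup_j\mathbb{F}_{q^{2^j f}}$, whose multiplicative orders are divisible by arbitrarily high powers of $2$, so $(k_\infty)_{\mathfrak q}$ contains $\mu_{2^\infty}$. By local class field theory the pro-$2$ part of the absolute abelian Galois group of $(k_\infty)_{\mathfrak q}$ then admits a \emph{ramified} $\mathbb{Z}_2$-quotient, the inertia contributing a free $\mathbb{Z}_2$. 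Consequently the Galois group of the maximal abelian pro-$2$ extension of $k_\infty$ unramified outside $S$ receives a free $\mathbb{Z}_2$-contribution from inertia at $q$, and I would obtain $K_\infty$ as a $\mathbb{Z}_2$-quotient lying over the ramified quadratic extension $K_1=k_\infty(\sqrt{-q})$, chosen so that the image of inertia at $\mathfrak q$ is open. Nontriviality of $X(k_\infty)$ is used to guarantee that the relevant inertia class is not annihilated globally, so such a quotient with first layer exactly $K_1$ exists; since ramification at $q$ then persists to all layers, (R) holds.

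For (ii) I would dispose of $\lambda_2>0$ first, as it is clean. Because the primes of $S$ are totally ramified in $K_m/k_\infty$ for large $m$, one has $L(k_\infty)\cap K_m=k_\infty$ and $L(k_\infty)K_m/K_m$ is unramified, so $L(k_\infty)K_m\subseteq L(K_m)$ and restriction yields a surjection $X(K_m)\twoheadrightarrow X(k_\infty)$. Since $X(k_\infty)$ is nontrivial and $\mathbb{Z}_2$-torsion free we have $\rankztwo X(k_\infty)\geq 1$, whence by Proposition A(ii), $\lambda_2=\rankztwo X(K_m)\geq\rankztwo X(k_\infty)>0$ for all $m\geq m_0$; this is precisely where torsion-freeness (rather than mere nontriviality) of $X(k_\infty)$ is needed. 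Finally, for (T): finite generation of $X(K_\infty)$ over $\Lambda_H$ is immediate, since $X(k_\infty)\cong\mathbb{Z}_2^r$ is finitely generated over $\mathbb{Z}_2$ and we may invoke the remark recalled in the introduction. The substance is $\Lambda_H$-torsion, which (identifying $\Lambda_H\cong\mathbb{Z}_2[[T]]$ and applying control theory to the layers $K_m$) is equivalent to $\rankztwo X(K_m)$ remaining bounded as $m\to\infty$: a positive $\Lambda_H$-rank would force this rank to grow like a constant multiple of $2^m$. I would bound these ranks by genus theory along the tower, decomposing the relevant module over $\mathbb{Q}(\sqrt{-m},\sqrt{-q})$ into contributions of its three quadratic subfields; the two imaginary contributions stay bounded (governed by the finite-rank $X(k_\infty)$ and the tame part at $q$), while the only potentially unbounded term is the real one attached to $\mathbb{Q}(\sqrt{mq})=K_1$-direction, which is killed exactly when the cyclotomic $\mathbb{Z}_2$-$\lambda$-invariant of $\mathbb{Q}(\sqrt{mq})$ vanishes. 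The main obstacle is making this last reduction precise — separating the bounded anticyclotomic contributions from the growing real contribution along the horizontal direction and showing the latter is governed by $\lambda(\mathbb{Q}(\sqrt{mq}))$ — after which boundedness of $\rankztwo X(K_m)$, hence (T), follows.
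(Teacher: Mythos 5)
Your proposal has genuine gaps in both parts. For (i), your construction of $K_\infty$ rests on a local-to-global step that fails: from the fact that tame inertia at $\mathfrak{q}$ is pro-cyclic and free locally, you infer that $X_S(k_\infty)$ ``receives a free $\mathbb{Z}_2$-contribution from inertia at $q$,'' invoking nontriviality of $X(k_\infty)$ to guarantee the inertia class ``is not annihilated globally.'' Neither claim is justified, and the pattern of reasoning is false in general: for the very same congruence class $q \equiv 3 \pmod{8}$ one has $X_S(\mathbb{B}_\infty)=0$ (a triviality, due to Sall\'e \cite{Salle}, that the paper itself uses), so a local inertia group isomorphic to $\mathbb{Z}_2$ can die entirely in the global group; whether it survives is governed by the image of global units in the local units at $q$, which is a global computation unrelated to nontriviality of $X(k_\infty)$. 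Moreover, even granting that inertia has infinite global image, realizing $k_\infty(\sqrt{-q})$ as the first layer of a $\mathbb{Z}_2$-extension requires lifting the surjection $X_S(k_\infty) \to \mathbb{Z}/2$ cutting out this field to a surjection onto $\mathbb{Z}_2$, which can fail when $X_S(k_\infty)$ has $\mathbb{Z}_2$-torsion (e.g.\ for $\mathbb{Z}_2 \oplus \mathbb{Z}/2$). What is actually needed, and what the paper proves, is that $X_S(k_\infty)$ is a \emph{free} $\mathbb{Z}_2$-module of positive rank: the $2$-rank of $X_S(k_n)$ is bounded by $\lambda_0+1$ (via the class field theory exact sequence relating units to $(O_{k_n}/(q))^\times$, the triviality of $X_S(\mathbb{B}_n)$, and the fact that exactly two primes of $k_n$ lie above $q$), while $\rankztwo X_S(k_\infty) = \lambda_0+1$ by \cite[Theorem 1.4]{IMO}; comparing forces freeness. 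Note that this is where the hypothesis that $X(k_\infty)$ is $\mathbb{Z}_2$-torsion free does its essential work (so that its $2$-rank equals $\lambda_0$), not merely in the $\lambda_2>0$ step as you assert.

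For (ii), your argument that $\lambda_2>0$ is correct and is the paper's. But for (T) your plan stalls exactly where you admit it does: you reduce torsionness to boundedness of $\rankztwo X(K_m)$ as $m \to \infty$ and propose to control these ranks by decomposing over the three quadratic subfields at every level, yet the genus-theoretic decomposition exists only at $m=1$, since $K_1 = \mathbb{B}_\infty(\sqrt{-m},\sqrt{-q})$ is biquadratic over $\mathbb{B}_\infty$ while $K_m$ for $m \geq 2$ is not multiquadratic (indeed $K_\infty/\mathbb{B}_\infty$ need not even be Galois). The missing idea is the paper's Lemma B, extracted from Bloom's argument \cite{Blo}: if all ramified primes are totally ramified in $K_\infty/k_\infty$ and $\rankztwo X(K_m) - \rankztwo X(k_\infty) < 2^m-1$ for a \emph{single} $m \geq 1$, then (T) holds, because a positive $\Lambda_H$-rank of $X(K_\infty)$ would already force a rank excess of at least $2^m-1$ at that finite level (not just asymptotically). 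Taking $m=1$, the equality $\rankztwo X(K_1) = \rankztwo X(k_\infty) + \rankztwo X(\mathbb{B}_\infty(\sqrt{-q})) + \rankztwo X(\mathbb{B}_\infty(\sqrt{mq}))$, together with $\rankztwo X(\mathbb{B}_\infty(\sqrt{-q}))=0$ by Ferrero--Kida (\cite{Ferrero}, \cite{Kida}) and $\rankztwo X(\mathbb{B}_\infty(\sqrt{mq}))=0$ by your hypothesis on the $\lambda$-invariant of $\mathbb{Q}(\sqrt{mq})$, gives excess $0 < 1$, and (T) follows with no analysis of higher layers at all.
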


We also give an explicit example for Proposition \ref{prop_Q} (Example \ref{ex_Q}), 
and examples for Proposition \ref{prop_imag} (Examples \ref{ex_imag}, \ref{ex_imag_f}).
Moreover, as a special case of Proposition \ref{prop_imag}, we can obtain the 
following:

\begin{prop}\label{prop_imag2}
We put $p=2$.
Let $\ell$, $q$ be distinct prime numbers satisfying $\ell \equiv 7 \pmod{8}$ 
and $q \equiv 3 \pmod{8}$.
We denote by $r$ the number of primes of $\mathbb{B}_\infty$ lying above $\ell$ 
($r \geq 2$ in this situation).
Put $k =\mathbb{Q} (\sqrt{-\ell})$.
Let $S$ be the set of primes of $k_\infty$ lying above $q$.
Take a $\mathbb{Z}_2$-extension $K_\infty / k_\infty$ which is
unramified outside $S$ and satisfies $K_1 = k_\infty (\sqrt{-q})$
(such a $\mathbb{Z}_2$-extension exists by Proposition \ref{prop_imag} (i)).
Then 
\[ X (K_\infty) \cong \mathbb{Z}_2^{r-1} \]
as a $\mathbb{Z}_2$-module, 
and $\lambda_1 = \mu_1=0$, $\lambda_2 = r-1$.
\end{prop}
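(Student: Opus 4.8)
The plan is to recognize this as the case $m=\ell$ (with $\ell$ prime) of Proposition~\ref{prop_imag}, and then to sharpen the conclusion ``$\lambda_2>0$'' of Proposition~\ref{prop_imag}(ii) into an exact determination of the $\Lambda_H$-module $X(K_\infty)$. First I would record the splitting data forced by the congruences: since $-\ell\equiv 1\pmod 8$ the prime $2$ splits in $k=\mathbb{Q}(\sqrt{-\ell})$, and since $\ell\equiv -1\pmod 8$ it splits in $\mathbb{B}_1=\mathbb{Q}(\sqrt 2)$, so $r\ge 2$; as $\ell$ is totally ramified in $k/\mathbb{Q}$, the number of primes of $k_\infty$ above $\ell$ is exactly $r$. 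To place ourselves inside Proposition~\ref{prop_imag} I must verify its hypotheses for $m=\ell$. The discriminant $-\ell$ carries a single ramified prime, so by genus theory the class number of $k$ is odd and $X(k)=0$; yet $X(k_\infty)\neq 0$, which I would obtain from Chevalley's ambiguous class number formula for $k_1/k$: the two primes above $2$ are totally ramified, the global units are only $\pm 1$, and $-1=N_{k_1/k}(1+\sqrt 2)$ is a norm, so the ambiguous class number equals $2$ and $X(k_1)\neq 0$. Torsion-freeness of $X(k_\infty)$ (absence of a finite $\Lambda$-submodule) I would cite from the structure theory of imaginary quadratic fields.

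The heart of the argument is the identification $X(k_\infty)\cong\mathbb{Z}_2^{\,r-1}$. I would locate the rank in the $r$ primes $\mathfrak l_1,\dots,\mathfrak l_r$ of $k_\infty$ lying above $\ell$: these are unramified in $k_\infty/k$ and satisfy the single relation $\mathfrak l_1\cdots\mathfrak l_r=(\sqrt{-\ell})$ coming from the principal prime $\mathfrak l=(\sqrt{-\ell})$ of $k$. Using $X(k)=0$, the Ferrero--Washington vanishing $\mu(k_\infty/k)=0$, and the fact that the primes above $2$ contribute nothing, I would argue that the classes of the $\mathfrak l_i$ generate $X(k_\infty)$ subject only to this relation, making $X(k_\infty)$ free of rank $r-1$ over $\mathbb{Z}_2$. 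I would then compare $X(K_\infty)$ with $X(k_\infty)$ through genus theory for the $\mathbb{Z}_2$-extension $K_\infty/k_\infty$, which by (R) is ramified only at the primes above $q$. The task is to show this $q$-ramification adds nothing: that the associated inertia contribution is absorbed and the natural $H$-equivariant map $X(k_\infty)\to X(K_\infty)$ is an isomorphism. Since $H=\Gal(K_\infty/k_\infty)$ fixes ideals extended from $k_\infty$, surjectivity of this map automatically forces $H$ to act trivially on $X(K_\infty)$, so that $X(K_\infty)\cong\mathbb{Z}_2^{\,r-1}$ as a $\Lambda_H$-module. This is precisely the step that must exploit $q\equiv 3\pmod 8$ and the defining property $K_1=k_\infty(\sqrt{-q})$.

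Granting $X(K_\infty)\cong\mathbb{Z}_2^{\,r-1}$ with trivial $H$-action, the remaining assertions follow formally. Being finitely generated over $\mathbb{Z}_2$, $X(K_\infty)$ is a torsion $\Lambda_H$-module, so condition (T) holds directly, independently of the $\lambda$-hypothesis of Proposition~\ref{prop_imag}(ii). Because $T_H=\gamma-1$ annihilates $X(K_\infty)$, the control/coinvariant comparison yields $\rankzp X(K_m)=r-1$ for all large $m$, whence $\lambda_2=r-1$ by Proposition~A(ii); and since $X(K_\infty)$ is $\mathbb{Z}_2$-torsion free, the finite groups $C_m$ remain bounded, forcing $\lambda_1=\mu_1=0$.

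I expect the main obstacle to be the comparison in the second paragraph: proving that the $q$-ramification in $K_\infty/k_\infty$ introduces neither extra $\mathbb{Z}_2$-rank nor a nontrivial $H$-action, i.e.\ that $X(k_\infty)\to X(K_\infty)$ is an isomorphism. This forces one to pin down the decomposition of the primes above $q$ and the image of the corresponding inertia subgroups in the relevant Iwasawa module precisely enough to see that they lie in the part already trivial, and it is here that the congruence $q\equiv 3\pmod 8$ together with the explicit generator $\sqrt{-q}$ of $K_1/k_\infty$ must be used.
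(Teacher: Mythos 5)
Your outline stalls at exactly the point you yourself flag as ``the main obstacle'': you never supply an argument that the ramification at $q$ contributes nothing, i.e.\ that $X(K_m)\cong X(k_\infty)$ for every layer (note also that the natural comparison map here is the \emph{restriction} $X(K_\infty)\to X(k_\infty)$, not a map $X(k_\infty)\to X(K_\infty)$). That comparison is the entire content of the proposition, and the paper does not get it from genus theory for $K_\infty/k_\infty$; it is assembled from three specific inputs, none of which appear in your sketch: (a) the Kida-type rank formula already used in the proof of Proposition~\ref{prop_imag},
\[ \rankztwo X (K_1) = \rankztwo X (k_\infty) + \rankztwo X (\mathbb{B}_\infty (\sqrt{-q})) + \rankztwo X (\mathbb{B}_\infty (\sqrt{\ell q})), \]
whose last two terms vanish by Ferrero/Kida \cite{Ferrero}, \cite{Kida} and by Iwasawa's theorem \cite{Iwa56} (see \cite{OT}), giving $\rankztwo X(K_1)=r-1$; (b) Atsuta's theorem \cite[Corollary 1.4]{Atsuta} for CM-fields at $p=2$, applied to the tower $k_n(\sqrt{-q})$ with maximal real subfields $\mathbb{B}_n(\sqrt{\ell q})$: since $X(\mathbb{B}_\infty(\sqrt{\ell q}))$ is trivial one gets $(1+J)X(K_1)=0$ and then that $X(K_1)$ is $\mathbb{Z}_2$-torsion free, which upgrades the rank equality to an isomorphism $X(K_1)\cong X(k_\infty)$ via restriction; (c) Mizusawa--Yamamoto \cite[Theorem 2.1]{MY_arch}, which propagates this first-layer isomorphism to $X(K_m)\cong X(k_\infty)$ for all $m\geq 2$ and hence to $X(K_\infty)\cong \mathbb{Z}_2^{r-1}$ in the limit. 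Your first paragraph (the determination of $X(k_\infty)$) is essentially fine, but it only reproves what the paper quotes from the Ferrero--Kida formula and Kida's torsion-freeness result (Remark~\ref{rem_F-K}); the difficulty is not there.

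A second, independent problem: your closing claim that the numerical invariants ``follow formally'' from $X(K_\infty)\cong\mathbb{Z}_2^{r-1}$ with trivial $H$-action is false. The control theorem (Section~\ref{sec_propA}) gives $X(K_m)\cong X(K_\infty)/\nu_{m,e_1}\mathcal{Y}_{e_1}$, where $\mathcal{Y}_{e_1}$ contains, besides $\omega_{e_1}X(K_\infty)$, the inertia-difference classes attached to the several primes of $S$; when $H$ acts trivially, $\nu_{m,e_1}$ acts on $\mathcal{Y}_{e_1}$ as multiplication by $2^{m-e_1}$, so $|C_m|$ grows like $2^{(m-e_1)\rankztwo \mathcal{Y}_{e_1}}$ and one gets $\lambda_1=\rankztwo\mathcal{Y}_{e_1}$, $\lambda_2=(r-1)-\rankztwo\mathcal{Y}_{e_1}$, $\mu_1=0$. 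Thus trivial action plus torsion-freeness of $X(K_\infty)$ yields only $\lambda_1+\lambda_2=r-1$; to conclude $\lambda_1=0$ and $\lambda_2=r-1$ you must show $\mathcal{Y}_{e_1}=0$, equivalently that each finite-layer module $X(K_m)$ is itself $\cong\mathbb{Z}_2^{r-1}$ --- which is precisely what step (c) above delivers, and what your ``control/coinvariant comparison'' tacitly assumes (coinvariants and the control-theorem quotient differ exactly by this inertia submodule).
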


This can be shown by using the method given in the proof of \cite[Theorem 4.4]{MY_arch}.
Note that we can take $\ell$ such that $r$ is larger than
any given positive integer.

\section{Proof of Theorem \ref{main_thm}}\label{sec_proof}

\subsection{Remarks on Proposition A}\label{sec_propA}
Before stating the proof of Theorem \ref{main_thm}, we give several remarks
about $K_\infty / k_\infty$.

Assume that $K_\infty / k_\infty$ satisfies (R).
Fix a topological generator $h$ of $H$.
We recall the well known elements $\omega_n$ and $\nu_{m,n}$ of $\Lambda_H$:
\[ \omega_n = h^{p^n} - 1, \quad \nu_{m,n} = \omega_m / \omega_n \]
where $m$, $n$ are non-negative integers satisfying $m > n$.
We also note that there is a $\Lambda_H$-submodule $\mathcal{Y}_{e_1}$ of $X (K_\infty)$
such that
\[ X (K_m ) \cong X (K_\infty) / \nu_{m,e_1} \mathcal{Y}_{e_1} \]
holds for all $m \geq e_1$.
This fact can be shown by using the well known argument given in a proof of
Iwasawa's class number formula (cf. \cite{Blo} for the case of $\mathbb{Z}_p^2$-extensions).

Proposition A is also obtained by slightly modifying the argument given in a proof of
Iwasawa's class number formula, and we do not give a detailed proof here.
In fact, we can find that almost the same result is stated in Greenberg \cite{Gre99} 
(see pp.79--80).
The key points are the existence of an integer $e_2 (> e_1)$ such that
$\nu_{e_2,e_1} \mathcal{Y}_{e_1} / \nu_{m,e_1} \mathcal{Y}_{e_1}$ is finite for all $m > e_2$,
and the exact sequence
\[ 0 \to \nu_{e_2,e_1} \mathcal{Y}_{e_1} / \nu_{m,e_1} \mathcal{Y}_{e_1}
\to X (K_\infty) / \nu_{m,e_1} \mathcal{Y}_{e_1}
\to X (K_\infty) / \nu_{e_2,e_1} \mathcal{Y}_{e_1}
\to 0. \]
The proposition follows by using the fact that
\[ |\nu_{e_2,e_1} \mathcal{Y}_{e_1} / \nu_{m,e_1} \mathcal{Y}_{e_1}| =
p^{\lambda_1 m + \mu_1 p^m + \nu'_1} \]
for all sufficiently large $m$, where $\lambda_1$ (resp. $\mu_1$) is the
$\lambda$-invariant (resp. $\mu$-invariant) of $\nu_{e_2,e_1} \mathcal{Y}_{e_1}$
as a finitely generated torsion $\Lambda_H$-module and $\nu'_1$ is a constant
(see, e.g., \cite[Proposition 13.19 and Lemma 13.21]{Was}).
We also remark that $\lambda_1 + \lambda_2$ (resp $\mu_1$) is equal to the $\lambda$-invariant
(resp. $\mu$-invariant) of $X (K_\infty)$ as a finitely generated torsion $\Lambda_H$-module.

\subsection{Preliminaries on the proof of Theorem \ref{main_thm}}
In this and the next subsections, $m$ denotes an integer satisfying $0 \leq m \leq m_1$ 
and $N$ denotes a non-negative integer.
We note that $K_m = F_m k_\infty$.
Take a non-negative integer $e_3$ satisfying that
every prime lying above $p$ is totally ramified in $K_m / F_m k_{e_3}$ for all $m$.
We put $\mathcal{F}_{m, N} = F_m k_{e_3 + N}$.
Then, $\mathcal{F}_{m, N}$ is the $N$th layer of the cyclotomic $\mathbb{Z}_p$-extension
$K_m / \mathcal{F}_{m, 0}$.
We put $\Gamma = \Gal (F_{m_1} / \mathcal{F}_{m_1, 0})$,
and let $\gamma$ be a fixed topological generator of $\Gamma$.
Note that $\Gamma$ is isomorphic to $\Gal (F_{m} / \mathcal{F}_{m, 0})$
for every $0 \leq m < m_1$ via the restriction map, and we shall identify them.
(We use the same symbol $\gamma$ as an element of $\Gal (F_{m} / \mathcal{F}_{m, 0})$.)
We put $\Lambda_\Gamma = \mathbb{Z}_p [[\Gamma]]$.
It is well known that $X (K_m)$ is a finitely generated torsion $\Lambda_\Gamma$-module.

Put $\nu_N = 1+ \gamma + \gamma^2 + \cdots + \gamma^{p^N}$.
Let $Y^{(m_1)}$ be the $\Lambda_\Gamma$-submodule of $X (K_{m_1})$
defined in, e.g., \cite[Chapter 13, Lemma 13.15]{Was}.
That is, $Y^{(m_1)}$ satisfies
\[ X (K_{m_1})/ \nu_N Y^{(m_1)} \cong X (\mathcal{F}_{m_1, N}) \]
for all $N \geq 0$.
In the following, we briefly recall the definition of $Y^{(m_1)}$.
Let $\mathfrak{P}^{(m_1)}_1, \ldots, \mathfrak{P}^{(m_1)}_t$ be the primes of
$\mathcal{F}_{m_1, 0}$ lying above $p$.
For $i =1, \ldots, t$, we denote by $I^{(m_1)}_i$ the inertia subgroup
of $\Gal (L (K_{m_1}) /\mathcal{F}_{m_1, 0})$ for a prime lying above $\mathfrak{P}^{(m_1)}_i$.
We note that the restriction map induces an isomorphism 
$I^{(m_1)}_i \cong \Gamma$.
Take the topological generator $\sigma^{(m_1)}_i$ whose restriction image on $K_{m_1}$
coincides with $\gamma$.
It is known that there is a unique element $\gamma^{(m_1)}_i$ of 
$X (K_{m_1})$ such that $\sigma^{(m_1)}_i = \gamma^{(m_1)}_i \sigma^{(m_1)}_1$.
Then, $Y^{(m_1)}$ is the $\Lambda_\Gamma$-submodule generated by
$\gamma^{(m_1)}_2, \ldots, \gamma^{(m_1)}_t$, and $(\gamma -1) X (K_{m_1})$.

We put $\overline{X} (K_{m_1}) = X (K_{m_1}) / C_{m_1}$.
Then $\overline{X} (K_{m_1})$ is also a
finitely generated torsion $\Lambda_\Gamma$-module.
Let $\overline{Y}^{(m_1)}$ be the restriction image of $Y^{(m_1)}$ in
$\overline{X} (K_{m_1})$.

\begin{lem}\label{lem_m1_bar}
For all sufficiently large $N$, we see that
$\nu_N Y^{(m_1)} \cong \nu_N \overline{Y}^{(m_1)}$ and the
exact sequence
\begin{equation}\label{C_m1_exact}
0 \to C_{m_1} \to X (\mathcal{F}_{m_1, N}) \to \overline{X} (K_{m_1}) / \nu_N \overline{Y}^{(m_1)}
\to 0 \end{equation}
holds.
\end{lem}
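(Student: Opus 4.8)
The plan is to reduce both conclusions of the lemma to the single vanishing statement
\[ \nu_N Y^{(m_1)} \cap C_{m_1} = 0 \quad \text{for all sufficiently large } N, \]
and then to establish this vanishing by combining the finiteness of $C_{m_1}$ with the multiplicative recursion satisfied by the norm elements $\nu_N$.

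First I would record that both assertions follow from this one vanishing. Let $\pi \colon X (K_{m_1}) \to \overline{X} (K_{m_1})$ be the projection with kernel $C_{m_1}$; by construction $\pi (Y^{(m_1)}) = \overline{Y}^{(m_1)}$, so $\pi$ maps $\nu_N Y^{(m_1)}$ onto $\nu_N \overline{Y}^{(m_1)}$, and the kernel of the induced surjection $\nu_N Y^{(m_1)} \twoheadrightarrow \nu_N \overline{Y}^{(m_1)}$ is exactly $\nu_N Y^{(m_1)} \cap C_{m_1}$. Thus the vanishing immediately yields $\nu_N Y^{(m_1)} \cong \nu_N \overline{Y}^{(m_1)}$. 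For the exact sequence, the induced map $X (K_{m_1}) / \nu_N Y^{(m_1)} \to \overline{X} (K_{m_1}) / \nu_N \overline{Y}^{(m_1)}$ is surjective with kernel $(C_{m_1} + \nu_N Y^{(m_1)}) / \nu_N Y^{(m_1)} \cong C_{m_1} / (C_{m_1} \cap \nu_N Y^{(m_1)})$; under the vanishing this kernel is all of $C_{m_1}$, and recalling $X (K_{m_1}) / \nu_N Y^{(m_1)} \cong X (\mathcal{F}_{m_1, N})$ we obtain precisely the exact sequence \eqref{C_m1_exact}. (A snake-lemma diagram comparing the two presentations makes this bookkeeping transparent.)

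It remains to prove the vanishing, and this is where the real work lies. Put $T = Y^{(m_1)} \cap C_{m_1}$; this is a finite, $\Gamma$-stable submodule, and it is precisely the $\mathbb{Z}_p$-torsion submodule of $Y^{(m_1)}$, so the desired statement is equivalent to $\nu_N Y^{(m_1)}$ being $\mathbb{Z}_p$-torsion-free. Since $\overline{Y}^{(m_1)}$ is $\mathbb{Z}_p$-free and finitely generated, it is Noetherian, and because $\nu_N$ divides $\nu_{N+1}$ the kernels $\ker (\nu_N \mid \overline{Y}^{(m_1)})$ form an increasing chain; hence they stabilize to a fixed submodule $K_0$ for all $N \geq N_1$. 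Let $\widetilde{K}$ be the preimage of $K_0$ in $Y^{(m_1)}$, a module independent of $N$; since $\pi (\nu_N \widetilde{K}) = \nu_N K_0 = 0$ for $N \geq N_1$, we have $\nu_N \widetilde{K} \subseteq T$ for all such $N$.

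The final step, which I expect to be the main obstacle, is to upgrade ``$\nu_N \widetilde{K} \subseteq T$'' to ``$\nu_N \widetilde{K} = 0$'' for large $N$, that is, to kill the degenerate directions of $\overline{Y}^{(m_1)}$ on which $\nu_N$ has a kernel. Here I would use the factorization $\nu_{N+1} = \omega'_N \nu_N$, where $\omega'_N = \sum_{j=0}^{p-1} \gamma^{j p^N} = (\gamma^{p^{N+1}} - 1)/(\gamma^{p^N} - 1)$, together with the fact that, $T$ being finite, the $\Gamma$-action on it factors through a finite quotient, so that $\gamma^{p^N}$ acts trivially and $\omega'_N$ acts as multiplication by $p$ on $T$ once $N \geq c$ for a suitable $c$. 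Then for $N \geq \max (N_1, c)$ an induction on $i$ gives $\nu_{N+i} \widetilde{K} = \omega'_{N+i-1} \cdots \omega'_N \nu_N \widetilde{K} \subseteq p^i T$, which vanishes as soon as $p^i T = 0$. Choosing $N$ beyond this bound yields $\nu_N \widetilde{K} = 0$. To conclude: any $z \in \nu_N Y^{(m_1)} \cap C_{m_1}$ has the form $z = \nu_N y$ with $\nu_N y \in T$, whence $\pi (y) \in \ker (\nu_N \mid \overline{Y}^{(m_1)}) = K_0$, so $y \in \widetilde{K}$ and $z = \nu_N y \in \nu_N \widetilde{K} = 0$. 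This establishes the vanishing, and hence the lemma.
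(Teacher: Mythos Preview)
Your proof is correct and takes essentially the same approach as the paper: reduce both conclusions to the vanishing $\nu_N Y^{(m_1)} \cap C_{m_1} = 0$ for large $N$, and derive the exact sequence from the resulting isomorphism together with $X(K_{m_1})/\nu_N Y^{(m_1)} \cong X(\mathcal{F}_{m_1,N})$. The paper's proof merely records that the kernel of $Y^{(m_1)} \to \overline{Y}^{(m_1)}$ is finite (being contained in $C_{m_1}$) and then asserts the isomorphism for large $N$ without further justification, whereas you spell out the vanishing argument in full via the factorization $\nu_{N+1} = \omega'_N \nu_N$ and the fact that $\omega'_N$ acts as multiplication by $p$ on the finite torsion.
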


\begin{proof}
The kernel of the restriction $Y^{(m_1)} \to\overline{Y}^{(m_1)}$ is finite
since it is contained in $C_{m_1}$.
Then we see that $\nu_N Y^{(m_1)} \cong \nu_N \overline{Y}^{(m_1)}$ if $N$ is sufficiently large.
From this and the isomorphism
$X (K_{m_1})/ \nu_N Y^{(m_1)} \cong X (\mathcal{F}_{m_1, N})$, we
also obtain \eqref{C_m1_exact}.
\end{proof}

Next, we fix an integer $m$ which satisfies $m_0 \leq m < m_1$.
Let $\mathfrak{P}^{(m)}_1, \ldots, \mathfrak{P}^{(m)}_s$ be the primes of
$\mathcal{F}_{m, 0}$ lying above $p$ (note that $s \leq t$).
Similar to the above,
take the inertia subgroup $I^{(m)}_j$ of $\Gal (L (K_{m}) /\mathcal{F}_{m, 0})$
for each $\mathfrak{P}^{(m)}_j$ ($j = 1,\ldots, s$).

For a fixed $\mathfrak{P}^{(m_1)}_i$, there is a prime
$\mathfrak{P}^{(m)}_j$ lying below it.
Let $\sigma^{(m)}_i$ be the restriction image of $\sigma^{(m_1)}_i$ on $L (K_{m})$.
Then $\sigma^{(m)}_i$ is contained in $I^{(m)}_j$, and its restriction on 
$K_m$ is $\gamma$.
Take the element $\gamma^{(m)}_i$ of $X (K_{m})$ such that 
$\sigma^{(m)}_i = \gamma^{(m)}_i \sigma^{(m)}_1$.
This $\gamma^{(m)}_i$ coincides with the restriction of $\gamma^{(m_1)}_i$ on $L (K_m)$.

Let $Y^{(m)}$ be the restriction image of $Y^{(m_1)}$ in $X (K_{m})$.
That is, $Y^{(m)}$ is generated by
$\gamma^{(m)}_2, \ldots, \gamma^{(m)}_t$, and $(\gamma -1) X (K_{m})$.
(We note that the restriction map $X (K_{m_1}) \to X (K_m)$ is surjective 
since at least one prime is totally ramified in $K_{m_1} / K_{m}$.)
If two primes $\mathfrak{P}^{(m_1)}_h$, $\mathfrak{P}^{(m_1)}_i$
are lying above the same prime $\mathfrak{P}^{(m)}_j$, then
$\gamma^{(m)}_h (\gamma^{(m)}_i)^{-1}$ is contained in $(\gamma -1) X (K_{m})$.
Considering this fact, we see that 
$Y^{(m)}$ coincides with the usual ``$Y$'' for $X (K_m)$ 
(in the sense of, e.g., \cite[Chapter 13]{Was}).
Hence, $X (K_{m})/ \nu_N Y^{(m)} \cong X (\mathcal{F}_{m, N})$ for all $N \geq 0$.

We put $\overline{X} (K_{m}) = X (K_{m}) / C_{m}$.
Similar to Lemma \ref{lem_m1_bar},
if $N$ is sufficiently large then
we also obtain the exact sequence
\begin{equation}\label{C_m_exact}
0 \to C_{m} \to X (\mathcal{F}_{m, N}) \to \overline{X} (K_{m}) / \nu_N \overline{Y}^{(m)} \to 0.
\end{equation}
Note that $\overline{X} (K_{m})$ is a free $\mathbb{Z}_p$-module of rank $\lambda_2$
for $m_0 \leq m \leq m_1$.
We see that
\[ \overline{X} (K_{m_1}) \cong \overline{X} (K_{m}) \]
via the restriction map 
(recall that at least one prime is totally ramified in $K_{m_1} / K_{m}$).
Moreover, we can see that the restriction image
$\overline{Y}^{(m_1)}$ in $\overline{X} (K_{m})$
coincides with $\overline{Y}^{(m)}$.

\begin{lem}\label{lem_m1_to_m}
Let $m$ be an integer satisfying $m_0 \leq m < m_1$. \\
(i) The restriction map induces an isomorphism
\[ \overline{Y}^{(m_1)} \cong \overline{Y}^{(m)}. \]
(ii) For all $N \geq 0$, we obtain an isomorphism
\[ \overline{X} (K_{m_1}) / \nu_N \overline{Y}^{(m_1)}
\cong \overline{X} (K_{m}) / \nu_N \overline{Y}^{(m)}. \]
\end{lem}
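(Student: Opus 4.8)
The plan is to deduce both statements from the single fact, recorded just before the lemma, that the restriction map induces an isomorphism $\overline{X}(K_{m_1}) \cong \overline{X}(K_m)$ carrying $\overline{Y}^{(m_1)}$ onto $\overline{Y}^{(m)}$. The first thing I would pin down is that this restriction map is not merely a $\mathbb{Z}_p$-isomorphism but a $\Lambda_\Gamma$-isomorphism. This follows because the $\Gamma$-actions on $X(K_{m_1})$ and $X(K_m)$ are defined through the Galois groups $\Gal(F_{m_1}/\mathcal{F}_{m_1,0})$ and $\Gal(F_m/\mathcal{F}_{m,0})$, which are identified with a common $\Gamma$ via the restriction maps; since restriction on the unramified Iwasawa modules commutes with this Galois action, the induced map on $\overline{X}$ is $\Gamma$-equivariant, hence $\Lambda_\Gamma$-linear.

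For part (i), I would argue as follows. Denote by $\rho\colon \overline{X}(K_{m_1}) \to \overline{X}(K_m)$ the isomorphism above. By construction $\rho(\overline{Y}^{(m_1)}) = \overline{Y}^{(m)}$, so the restriction of $\rho$ to $\overline{Y}^{(m_1)}$ is a surjection onto $\overline{Y}^{(m)}$. Since $\rho$ is injective on all of $\overline{X}(K_{m_1})$, its restriction to the submodule $\overline{Y}^{(m_1)}$ is injective as well. Hence $\rho$ restricts to an isomorphism $\overline{Y}^{(m_1)} \cong \overline{Y}^{(m)}$, which is exactly the asserted statement.

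For part (ii), I would use the $\Lambda_\Gamma$-linearity established above. Because $\nu_N \in \Lambda_\Gamma$ and $\rho$ is $\Lambda_\Gamma$-linear with $\rho(\overline{Y}^{(m_1)}) = \overline{Y}^{(m)}$, we get $\rho(\nu_N \overline{Y}^{(m_1)}) = \nu_N \overline{Y}^{(m)}$ for every $N \geq 0$. Thus $\rho$ is an isomorphism of $\overline{X}(K_{m_1})$ onto $\overline{X}(K_m)$ that maps the submodule $\nu_N \overline{Y}^{(m_1)}$ exactly onto $\nu_N \overline{Y}^{(m)}$; passing to quotients (the isomorphism theorem) yields
\[ \overline{X}(K_{m_1})/\nu_N \overline{Y}^{(m_1)} \cong \overline{X}(K_m)/\nu_N \overline{Y}^{(m)}, \]
as claimed.

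The only real point requiring care is the verification that the restriction map is $\Gamma$-equivariant for the chosen identification of $\Gamma$ with both $\Gal(F_{m_1}/\mathcal{F}_{m_1,0})$ and $\Gal(F_m/\mathcal{F}_{m,0})$; once this compatibility is in hand, both parts are formal consequences of the fact that a $\Lambda_\Gamma$-isomorphism carries $\nu_N$-multiples to $\nu_N$-multiples and descends to quotients. I expect this equivariance check to be the main, though routine, obstacle.
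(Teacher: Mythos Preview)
Your proposal is correct and follows essentially the same route as the paper: both deduce (i) from injectivity of the ambient isomorphism $\overline{X}(K_{m_1})\cong\overline{X}(K_m)$ restricted to $\overline{Y}^{(m_1)}$, and then pass to $\nu_N$-multiples and quotients for (ii). You are simply more explicit than the paper about the $\Lambda_\Gamma$-equivariance needed to carry $\nu_N\overline{Y}^{(m_1)}$ to $\nu_N\overline{Y}^{(m)}$, which the paper leaves implicit.
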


\begin{proof}
To show (i), it is sufficient to show that
$\overline{Y}^{(m_1)} \to \overline{Y}^{(m)}$ is injective.
This immediately follows from the fact that the restriction map
$\overline{X} (K_{m_1}) \to \overline{X} (K_{m})$ is an isomorphism.

By (i), we also obtain an isomorphism
$\nu _N \overline{Y}^{(m_1)} \cong \nu_N \overline{Y}^{(m)}$.
The assertion (ii) follows from this.
\end{proof}

\subsection{Finish of the proof of Theorem \ref{main_thm}}
Let the notation be as in the previous subsection.
We first consider the behavior of $| X (\mathcal{F}_{m_0,N}) |$.
Take the minimal integer $n^{\bullet}$ such that $K_{m_0}/k_{n^{\bullet}}$ is an abelian extension,
and a cyclic extension $F'_{m_0} /k_{n^{\bullet}}$ of degree $p^{m_0}$ satisfying
$K_{m_0} = F'_{m_0} k_{\infty}$.
By Iwasawa's class number formula, we see that
\[ | X (F'_{m_0} k_{n}) | = p^{\lambda_2 n + \nu'} \]
for all sufficiently large $n$ (with a constant $\nu'$).

\begin{rem}
The extension $F'_{m_0} /k_{n^{\bullet}}$ taken the above is not unique.
That is, there is another extension $F''_{m_0} /k_{n^{\bullet}}$ such that
$K_{m_0} = F''_{m_0} k_{\infty}$.
However, we see that $F''_{m_0} k_{n^{\bullet} + N'} = F'_{m_0} k_{n^{\bullet} + N'}$ when
$N' \geq m_0$.
Since $m_0$, $n^{\bullet}$ is dependent only on $K_\infty /k$,
the constant $\nu'$ is also dependent only on $K_\infty /k$.
By using the same argument, we also see that
$\mathcal{F}_{m_0,N} = F'_{m_0} k_{e_3 + N}$ if $N$ is sufficiently large.
\end{rem}

In the following, we assume that $N$ is sufficiently large.
From \eqref{C_m_exact}, we see that
\[ | X (\mathcal{F}_{m_0,N}) | = | \overline{X} (K_{m_0})/ \nu_N \overline{Y}^{(m_0)} | \cdot |C_{m_0}|. \]
Take the constant $\nu''$ such that $|C_{m_0}| = p^{\nu''}$
(actually, $\nu'' = \lambda_1 m_0 + \mu_1 p^{m_0} + \nu_1$).
By combining the above results, we see that
\[ | \overline{X} (K_{m_0})/ \nu_N \overline{Y}^{(m_0)} | = p^{\lambda_2 (e_3 + N) + \nu' - \nu''}. \]
Moreover, by Lemma \ref{lem_m1_to_m} (ii), we also see that
\[ | \overline{X} (K_{m})/ \nu_N \overline{Y}^{(m)} | = p^{\lambda_2 (e_3 + N) + \nu' - \nu''}. \]
for all $m_0 < m \leq m_1$.
Using \eqref{C_m1_exact}, \eqref{C_m_exact}, and Proposition A (i), we finally see that
\begin{equation}\label{final_eq}
| X (\mathcal{F}_{m,N}) | = |C_m| \cdot | \overline{X} (K_{m})/ \nu_N \overline{Y}^{(m)} |
= p^{\lambda_1 m + \mu_1 p^{m} + \lambda_2 (e_3 + N) + \nu' - \nu'' + \nu_1}
\end{equation}
for all $m_0 \leq m \leq m_1$.

We put $\nu = \nu' - \nu'' + \nu_1$.
This $\nu$ is dependent only on $K_\infty /k$ because
$m_0$, $\nu'$, $\nu''$, $\nu_1$ are so.
Recall that $\mathcal{F}_{m,N} = F_m k_{e_3 + N}$, and then
Theorem \ref{main_thm} follows from \eqref{final_eq}.

\section{Several remarks on Theorem \ref{main_thm}}\label{sec_remarks}

\subsection{Existing formulae and our formula}
First, assume that $K_\infty / k$ is a $\mathbb{Z}_p^2$-extension.
Then, as already mentioned in Section \ref{introduction},
there is a $\mathbb{Z}_p$-extension $F'_\infty / k_n$ inside $K_\infty$
such that $F_{m_1} k_n$ is its $m_1$th layer
We denote by $F'_m$ the $m$th layer of $F'_\infty / k_n$, and
recall that
\[ |X (F'_m)| = p^{\lambda (F'_\infty / k_n) m + \mu (F'_\infty / k_n) p^m + \nu (F'_\infty / k_n)} \]
if $m$ is sufficiently large.
On the other hand, the formula given in Corollary \ref{main_cor} describes the behavior of $| X (F'_m)|$
for relatively small $m$.
Moreover, the definition of $\lambda_1$, $\mu_1$ is slightly different
from $\lambda (F'_\infty / k_n)$, $\mu (F'_\infty / k_n)$.
In the following, we shall explain about this.

For simplicity, we assume that there is only one prime of $k$ lying above $p$,
and it is totally ramified in $K_\infty$.
Then, the usial Iwasawa invariants
$\lambda (F'_\infty / k_n)$, $\mu (F'_\infty / k_n)$ comes from the structure of
the coinvariant quotient $X (K_\infty)_{\Gal (K_\infty / F'_\infty)}$
as a $\mathbb{Z}_p [[ \Gal (F'_\infty / k_n)]]$-module.
Whereas, our invariants $\lambda_1$, $\mu_1$ comes from the
structure of $X (K_\infty)$ itself.
Hence it may possible that our $\lambda_1$ (resp. $\mu_1$) does not coincide with
$\lambda (F'_\infty / k_n)$ (resp. $\mu (F'_\infty / k_n)$) in general.
We also mention that there are infinitely many $\mathbb{Z}_p$-extensions
over $k_n$ inside $K_\infty$ being $F_{m_1} k_n$ its $m_1$th layer.

Next, assume that $K_\infty / k$ is a $\mathbb{Z}_p \rtimes \mathbb{Z}_p$-extension.
A.~Lei \cite{Lei} gives a Iwasawa type formula for a
certain sequence of intermediate fields.
(Actually, he gives the formula for more general $p$-adic Lie extensions.)
The formula given in \cite[Corollary 3.4]{Lei} for the case that $d=2$ seems close to ours
(note that Lei's formula holds without assuming (T)).
However, similar to the above case, Lei's formula covers for sufficiently large $m$.
In particular, the field appears in Lei's formula is a non-Galois extension
over $k_n$ of degree $p^m$ in general if $m$ is large (see \cite[p.348]{Lei}).
Recall that our formula covers the case that $m$ is 
relatively small (and the fields are cyclic extensions over $k_n$).
Furthermore, by the similar reason to the above,
the definition of Lei's invariants $\lambda_H (\mathcal{X}_{\Gamma_n})$,
$\mu_H (\mathcal{X}_{\Gamma_n})$ is different from our $\lambda_1$, $\mu_1$.

\subsection{More remarks about preceding studies}\label{sec_Galois_exam}
When $K_\infty / k_\infty$ is a $\mathbb{Z}_p \rtimes \mathbb{Z}_p$-extension,
our condition (T) relates the pseudo-nullity of $X (K_\infty)$ as
a $\mathbb{Z}_p [[\Gal (K_\infty / k)]]$-module.
For the details, see, e.g., Hachimori-Sharifi \cite{HS}.

For example, let $k$ be an imaginary quadratic field, and
$K_\infty /k$ the unique $\mathbb{Z}_p^2$-extension.
In this case, Greenberg's generalized conjecture \cite[Conjecture (3.5)]{Gre01} (GGC)
says that $X (K_\infty)$ is a pseudo-null
$\mathbb{Z}_p [[\Gal (K_\infty / k_\infty)]]$-module.
In this situation, GGC is equivalent to the condition 
that $X (K_\infty)$ is $\Lambda_H$-torsion
(see \cite{HS}, \cite{I07}).
Moreover, if $p$ does not split in $k$, then $K_\infty / k_\infty$ satisfies (R).
Several studies of GGC for this case exist.
See, e.g., \cite{Min}, \cite{I07}, \cite{Fuj}.
In particular, one can find examples satisfying $\lambda_2 > 0$ in
these literature.

There are also several studies on the pseudo-nullity of
$X (K_\infty)$ for the case of non-commutative $\mathbb{Z}_p \rtimes \mathbb{Z}_p$-extensions.
\begin{itemize}
\item Li-Ouyang-Xu-Zhang \cite{LOXZ} :
They gives the results on the class numbers of
$\mathbb{Q} (\zeta_{p^n}, \sqrt[p^m]{q})$.
In particular the finiteness of $X (\mathbb{Q} (\zeta_{p^\infty}, \sqrt[p^\infty]{q}))$
is shown for some cases.
\item Yamamoto \cite{Yamamoto}, Mizusawa-Yamamoto \cite{MY_tjm}, \cite{MY_arch}:
They gave certain $\mathbb{Z}_p \rtimes \mathbb{Z}_p$-extensions $K_\infty /k$
constructed by iterated extensions, 
and showed that it satisfies (R) and (T) for several specific cases.
In particular, some examples given by them satisfy $\lambda_2 >0$ 
(see \cite[Theorem 1.2 and Section 6]{Yamamoto} and \cite[Theorem 4.4, Remark 4.5, and Example 4.6]{MY_arch}).
\end{itemize}

In the next section, we will give more examples.
We remark that our Proposition \ref{prop_imag} treats a different situation 
from those mentioned above.

\section{Examples for the case that $K_\infty /k_\infty$ is unramified at the primes
lying above $p$}\label{sec_examples}

We shall define several notation used in this section.
Let $S$ be a finite set of non-archimedean primes of $k_\infty$.
In the following, we always assume that $S$ does not contain any prime lying above $p$.
We denote by $L_S (k_\infty) / k_\infty$ the maximal abelian pro-$p$ extension
unramified outside $S$, and put $X_S (k_\infty) = \Gal (L_S (k_\infty) / k_\infty)$.
This $X_S (k_\infty)$ is called a ``tamely ramified Iwasawa module'',
and several studies on $X_S (k_\infty)$ exist (e.g., \cite{IMO}).
Note that if $X (k_\infty)$ is finitely generated over $\mathbb{Z}_p$, then
$X_S (k_\infty)$ is also finitely generated over $\mathbb{Z}_p$ 
(see, e.g., \cite[p.1494]{IMO}).

We also denote $L_S (K_\infty) / K_\infty$ by the maximal abelian pro-$p$ extension 
unramified outside the primes lying above $S$, 
and put $X_S (K_\infty) = \Gal (L_S (K_\infty) / K_\infty)$.
We can see that $X_S (K_\infty)$ is a compact $\Lambda_H$-module.

\subsection{Examples obtained from known results}

The following lemma is useful to find examples of $K_\infty/k_\infty$ satisfying 
(R) and (T).

\begin{lem}\label{lem_S_ram}
Assume that $X (k_\infty)$ is finite and $\rankzp X_S (k_\infty) =1$.
Then there exists a unique $\mathbb{Z}_p$-extension $K_\infty / k_\infty$
unramified outside $S$.
Moreover this $K_\infty / k_\infty$ satisfies (R) and (T).
\end{lem}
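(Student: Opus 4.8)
The plan is to treat the three assertions in turn, reserving the torsion part of (T) as the genuinely delicate point.

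For existence and uniqueness, first note that since $X(k_\infty)$ is finite it is in particular finitely generated over $\mathbb{Z}_p$, so by the fact recalled in Section~\ref{sec_examples} the module $X_S(k_\infty)$ is finitely generated over $\mathbb{Z}_p$; as $\rankzp X_S(k_\infty)=1$ we may write $X_S(k_\infty)\cong \mathbb{Z}_p\oplus T$ with $T$ the finite torsion subgroup. By class field theory the $\mathbb{Z}_p$-extensions of $k_\infty$ unramified outside $S$ correspond to the closed subgroups $Y\subseteq X_S(k_\infty)$ with $X_S(k_\infty)/Y\cong\mathbb{Z}_p$. Any such $Y$ has $\rankzp Y=0$, hence is finite and contains $T$; since $X_S(k_\infty)/T\cong\mathbb{Z}_p$ has no proper quotient isomorphic to $\mathbb{Z}_p$, we are forced to have $Y=T$. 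This gives exactly one such extension $K_\infty$, with $H=\Gal(K_\infty/k_\infty)\cong X_S(k_\infty)/T\cong \mathbb{Z}_p$.

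Condition (R) is then immediate. Only primes of $S$ can ramify in $K_\infty/k_\infty$, so at most finitely many primes ramify. If no prime ramified, then $K_\infty/k_\infty$ would be an unramified abelian pro-$p$ extension, whence $K_\infty\subseteq L(k_\infty)$ and $H\cong\mathbb{Z}_p$ would be a quotient of the finite group $X(k_\infty)$ --- a contradiction. Hence at least one prime ramifies and (R) holds.

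For (T), finite generation over $\Lambda_H$ is already recorded in Section~\ref{introduction}: as $X(k_\infty)$ is finite we have $\mu(k_\infty/k)=0$, so $X(K_\infty)$ is a finitely generated $\Lambda_H$-module. It remains to prove that it is $\Lambda_H$-torsion, and here I would use the criterion that a finitely generated $\Lambda_H$-module $M$ with finite coinvariants $M_H=M/\omega_0 M$ is necessarily torsion: applying the right-exact functor $(-)_H$ to a pseudo-isomorphism $M\to \Lambda_H^{\,r}\oplus(\text{torsion})$ shows $\rankzp M_H\ge r$, so $M_H$ finite forces $r=\mathrm{rank}_{\Lambda_H}M=0$. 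Thus the task reduces to showing that $X(K_\infty)_H$ is finite. By the standard argument in a proof of Iwasawa's class number formula (cf.~\cite[Chapter~13]{Was}, using the module $\mathcal{Y}_{e_1}$ of Section~\ref{sec_propA}), one has $X(k_\infty)\cong X(K_\infty)/Y_0$ with $\omega_0 X(K_\infty)\subseteq Y_0$ and $Y_0/\omega_0 X(K_\infty)$ generated by the (finitely many) inertia classes of the primes of $K_\infty$ lying over the ramified primes of $S$; consequently $X(K_\infty)_H$ is an extension of the finite group $X(k_\infty)$ by this inertia part.

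The main obstacle is therefore to prove that this inertia part is finite, and this is exactly where the hypothesis $\rankzp X_S(k_\infty)=1$ is used. At the base level the inertia subgroups of the primes of $S$ generate $X_S(k_\infty)$ modulo $X(k_\infty)$, so the total inertia has $\mathbb{Z}_p$-rank $\rankzp X_S(k_\infty)=1$; by construction $K_\infty$ is cut out precisely so that this rank-one inertia maps onto $H$. The content to be checked is a compatibility between the two tiers of the tower: that the inertia classes appearing in $Y_0/\omega_0 X(K_\infty)$ at the top field $K_\infty$ are accounted for, up to a finite group, by this rank-one inertia at $k_\infty$, so that after passing to the $H$-coinvariants --- that is, after dividing by the $H$-direction into which the rank-one inertia surjects --- only a finite group survives. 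Granting this, $X(K_\infty)_H$ is finite, hence $X(K_\infty)$ is $\Lambda_H$-torsion and (T) holds. I expect this last bookkeeping --- matching the inertia at $K_\infty$ with the rank computation for $X_S(k_\infty)$ --- to be the only nontrivial step; in fact it seems likely that one obtains the stronger conclusion that $X(K_\infty)$ is finitely generated over $\mathbb{Z}_p$ (compare Example~\ref{ex_MO}).
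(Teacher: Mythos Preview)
Your treatment of existence, uniqueness, and condition (R) is correct and matches the paper.  The issue lies in your handling of (T): you correctly reduce to showing that $X(K_\infty)_H$ is finite, and you correctly isolate the ``inertia part'' $Y_0/\omega_0 X(K_\infty)$ as the only potentially infinite piece, but you do not prove it is finite.  The compatibility you describe between inertia at $K_\infty$ and the rank computation for $X_S(k_\infty)$ is not just bookkeeping --- it is the entire content of the lemma, and you have not supplied it.  As written, the argument is incomplete at exactly the point you flag with ``granting this''.

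The paper closes this gap by a device that avoids the inertia accounting altogether: rather than working with $X(K_\infty)$, one works with the larger module $X_S(K_\infty)=\Gal(L_S(K_\infty)/K_\infty)$.  Since $L_S(K_\infty)/k_\infty$ is unramified outside $S$, its maximal subextension abelian over $k_\infty$ is exactly $L_S(k_\infty)$, and one obtains the clean identification
\[
X_S(K_\infty)/\omega_0\,X_S(K_\infty)\;\cong\;\Gal(L_S(k_\infty)/K_\infty),
\]
which is finite because $\rankzp X_S(k_\infty)=1$ and $K_\infty$ already accounts for the free $\mathbb{Z}_p$-quotient.  By the same Nakayama/structure-theorem argument you invoke, $X_S(K_\infty)$ is then finitely generated and $\Lambda_H$-torsion; since $X(K_\infty)$ is a quotient of $X_S(K_\infty)$, it inherits both properties.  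The point is that by allowing ramification along $S$ at the top of the tower, the coinvariant quotient becomes a Galois group over $k_\infty$ with no residual inertia contribution to track --- precisely the difficulty your approach runs into.

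One further remark: your closing expectation that $X(K_\infty)$ is finitely generated over $\mathbb{Z}_p$ does not follow from this argument (finite $H$-coinvariants do not preclude summands of type $\Lambda_H/p^a$), and Example~\ref{ex_MO} establishes finiteness only under additional hypotheses.
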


\begin{proof}
The existence of $K_\infty / k_\infty$ and the validity of (R)
immediately follow from the assumption.

We shall show the validity of (T).
Since $X (K_\infty)$ is a quotient of $X_S (K_\infty)$,
it is sufficient to show that
$X_S (K_\infty)$ is a finitely generated torsion $\Lambda_H$-module.
Since 
\[ X_S (K_\infty)/ \omega_0 X_S (K_\infty) \cong \Gal (L_S (k_\infty) /K_\infty) \]
and $L_S (k_\infty) /K_\infty$ is a finite extension, 
the assertion follows from the well known argument using the structure theorem 
of finitely generated $\Lambda_H$-module 
(see, e.g., \cite[p.281]{Was}).
\end{proof}

\begin{example}[see \cite{IMO}]\label{ex_S_ram}
Assume that $p$ is odd.
We state typical examples of $k$ and $S$ satisfying the assumption of Lemma \ref{lem_S_ram}.
In the following list, $q, q_1, q_2$ are prime numbers ($q_1 \neq q_2$), 
and take $S$ as the set of primes of $k_\infty$ lying above $S_0$.
\begin{itemize}
\item[(a)] $k = \mathbb{Q}$.
$S_0 = \{ q_1, q_2 \}$ with
$q_i \equiv 1 \pmod{p}$ and $q_i \not\equiv 1 \pmod{p^2}$ for $i=1,2$.
\item[(b)] $k$ is an imaginary quadratic field satisfying the following:
$p$ does not split in $k$ and the class number of $k$ is prime to $p$.
$S_0 = \{ q \}$ with $q \equiv 1 \pmod{p}$, $q \not\equiv 1 \pmod{p^2}$,
and $q$ splits in $k$.
\item[(c)] $k$ is the same as given in (b).
$S_0 = \{ q \}$ with $q \equiv -1 \pmod{p}$, $q^2 \not\equiv 1 \pmod{p^2}$,
and $q$ is inert in $k$.
\end{itemize}
By using Theorem 1.1 or Theorem 1.4 of \cite{IMO}, we can see that the above cases satisfy the assumption
(we see that $X (k_\infty)$ is trivial by Iwasawa's theorem \cite{Iwa56}).
\begin{itemize}
\item We can see that $X (K_\infty)$ is trivial for the case (c).
Hence $\lambda_1 = \mu_1 = \lambda_2 = \nu =0$ for this case.
\item On the other hand, we can see that $X (K_\infty)$ is not trivial for the cases (a), (b).
In these cases, we can show that $L_S (k_\infty) /K_\infty$ is not trivial
(this follows from the results given in \cite{IMO}).
Moreover, since all primes of $S$ ramifies in $K_\infty / k_\infty$ and
the inertia subgroup of $L_S (k_\infty) /k_\infty$ for any prime of $S$ is pro-cyclic,
we see that $L_S (k_\infty) /K_\infty$ is unramified.
The non-triviality of $X (K_\infty)$ follows from this.
\end{itemize}
\end{example}

For the cases (a), (b) of the above Example \ref{ex_S_ram}, we need more consideration to
determine the invariants $\lambda_1$, $\mu_1$, $\lambda_2$.
However, we can determine them in a specific situation.

\begin{example}[see {\cite[Theorem 1]{MO}}]\label{ex_MO}
We give more consideration for the case (a) of Example \ref{ex_S_ram}.
We additionally suppose that the assumptions of \cite[Theorem 1]{MO} are satisfied.
Let $\widetilde{L}_S (k_\infty) / k_\infty$ be the maximal pro-$p$ extension 
unramified outside $S$.
By this theorem, we see that $\Gal (\widetilde{L}_S (k_\infty) / k_\infty)$ is 
isomorphic to $\mathbb{Z} / p^a \mathbb{Z} \rtimes \mathbb{Z}_p$ with some 
positive integer $a$.
This yields that $L_S (K_\infty)/ K_\infty$ is a finite extension.
Hence $L (K_\infty)/ K_\infty$ is also finite,
and then $\lambda_1 = \mu_1 = \lambda_2 = 0$.
On the other hand, from the fact that $X (K_\infty)$ is not trivial, we also see that $\nu >0$.
\end{example}

We mention that similar results to the above example for the case $p=2$ are also known.
See, e.g., \cite[Theorem 1]{Salle}, \cite[Theorem 2]{MO}.

\subsection{A lower bound of $\lambda_1$}
In \cite[p.266]{Gre76}, a lower bound of the $\lambda$-invariant of a 
$\mathbb{Z}_p$-extension of an algebraic number field is given.
We shall give a similar type result.

In this subsection, we always assume that $X_S (k_\infty)$ is 
finitely generated over $\mathbb{Z}_p$.
We put $d = \rankzp X_S (k_\infty)$.
Let $\overline{L}_S /k_\infty$ be the unique $\mathbb{Z}_p^d$-extension 
unramified outside $S$.
We denote by $S' (\subset S)$ the set of primes of $k_\infty$ which actually ramify in 
$\overline{L}_S /k_\infty$.

\begin{prop}\label{lower_lambda_1}
Let the notation be as in the previous paragraph, and suppose that $d \geq 1$.
Take a $\mathbb{Z}_p$-extension $K_\infty /k_\infty$ which is unramified outside $S'$.
Assume that $K_\infty /k_\infty$ satisfies (R) and 
all primes of $S'$ are totally ramified in $K_\infty /k_\infty$.
Assume also that $K_\infty /k_\infty$ satisfies (T).
Then the following inequality holds.
\[ \lambda_1 \geq d - 1 -\rankzp X (k_\infty). \]
\end{prop}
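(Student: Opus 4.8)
The plan is to extract information about $\lambda_1$ from the comparison between the fixed $\mathbb{Z}_p$-extension $K_\infty/k_\infty$ and the maximal $\mathbb{Z}_p^d$-extension $\overline{L}_S/k_\infty$ unramified outside $S$. The starting point is the remark in Section \ref{sec_propA} that $\lambda_1 + \lambda_2$ equals the $\lambda$-invariant of $X(K_\infty)$ as a torsion $\Lambda_H$-module. Since $\lambda_2 = \rankzp \overline{X}(K_m)$ and $\lambda_1 = \rankzp C_m - (\text{lower order})$ in the sense of Proposition A, the quantity I really want to bound from below is the $\mathbb{Z}_p$-rank of the $\mathbb{Z}_p$-torsion-free part of the graded pieces coming from $\nu_{e_2,e_1}\mathcal{Y}_{e_1}$. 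The cleanest route is to realize $X(K_\infty)$ (or its relevant subquotient) as related to $X_S(K_\infty) = \Gal(L_S(K_\infty)/K_\infty)$, because the genus-theoretic/ramification data that controls $d$ lives most naturally there.

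First I would set up the fundamental exact sequence relating the tamely ramified Iwasawa module over $k_\infty$ to the ramification in $K_\infty/k_\infty$. Concretely, since $\overline{L}_S/k_\infty$ is the maximal $\mathbb{Z}_p^d$-extension unramified outside $S$ and all primes of $S'$ are totally ramified in $K_\infty/k_\infty$, I would consider the inertia subgroups in $\Gal(\overline{L}_S/k_\infty)$ attached to the primes in $S'$, together with the image of $X(k_\infty)$ (the unramified-everywhere part). The key numerical input is that $\rankzp X_S(k_\infty) = d$, that $X(k_\infty)$ accounts for $\rankzp X(k_\infty)$ of that rank, and that each totally ramified prime contributes at most one dimension of inertia. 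This is exactly the kind of bookkeeping behind Greenberg's lower bound in \cite[p.266]{Gre76}, adapted to the tamely ramified setting.

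The second main step is to transport this rank count into a statement about $X(K_\infty)$ as a $\Lambda_H$-module, and thence into $\lambda_1$. I would use that $K_\infty$ is one of the $\mathbb{Z}_p$-extensions sitting inside $\overline{L}_S$, so $H = \Gal(K_\infty/k_\infty)$ is a rank-one quotient of $\Gal(\overline{L}_S/k_\infty) \cong \mathbb{Z}_p^d$. Taking coinvariants of $X_S(K_\infty)$ under $\omega_0 = h - 1$ recovers $\Gal(L_S(k_\infty)/K_\infty)$ (as in the proof of Lemma \ref{lem_S_ram}), and a careful count of how much $\mathbb{Z}_p$-rank survives along the tower — using that $S'$-primes stay ramified and that $X(k_\infty)$ only contributes $\rankzp X(k_\infty)$ — should produce a submodule of $X(K_\infty)$ whose $\Lambda_H$-$\lambda$-invariant is at least $d - 1 - \rankzp X(k_\infty)$. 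Subtracting off the $\lambda_2$ part (the $\mathbb{Z}_p$-free quotient $\overline{X}(K_m)$) leaves the desired lower bound on $\lambda_1$; the $-1$ reflects the single inertia dimension consumed by fixing the chosen $\mathbb{Z}_p$-direction $K_\infty$ inside $\overline{L}_S$.

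The hard part, I expect, will be the last bookkeeping step: separating the contribution of $\lambda_1$ from $\lambda_2$ precisely. The remark in Section \ref{sec_propA} gives $\lambda_1 + \lambda_2$ as the $\lambda$-invariant of $X(K_\infty)$, so a naive rank count only bounds the \emph{sum}. To isolate $\lambda_1$ I must show that the rank I produce comes from the $\mathbb{Z}_p$-torsion layers $C_m$ (the source of $\lambda_1$ via Proposition A) rather than from the free quotient $\overline{X}(K_m)$. The mechanism should be that the inertia-generated rank is killed upon passing to finite layers $K_m$ — i.e., these classes become torsion in each $X(K_m)$ because the ramification becomes ``visible'' only in the limit — which is precisely what forces them into $C_m$ and hence into $\lambda_1$. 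Making this splitting rigorous, presumably by analyzing the action of $\nu_{m,e_1}$ on the inertia-generated submodule and invoking that totally ramified primes decouple from the unramified $X(k_\infty)$ contribution, is where the genuine work lies; everything else is standard Iwasawa-theoretic rank counting.
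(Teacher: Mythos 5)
Your setup is the same as the paper's (the $\mathbb{Z}_p^d$-extension $\overline{L}_S$, pro-cyclic inertia at the $S'$-primes because they are prime to $p$, the rank count $d$), and you have correctly located the real difficulty: a rank count on $X(K_\infty)$ only controls $\lambda_1+\lambda_2$. But your proposal stops exactly at that point --- you defer the splitting as ``where the genuine work lies'' --- and the mechanism you sketch for it is misdirected. The missing idea, which is the first line of the paper's proof, is this: each inertia subgroup of $\Gal(\overline{L}_S/k_\infty)$ at a prime of $S'$ is pro-cyclic and surjects onto $H=\Gal(K_\infty/k_\infty)$ by the total-ramification hypothesis, hence meets $\Gal(\overline{L}_S/K_\infty)$ trivially; therefore $\overline{L}_S/K_\infty$ is unramified \emph{everywhere}, so $\overline{L}_S\subset L(K_\infty)$ and $\mathcal{F}:=\Gal(\overline{L}_S/K_\infty)\cong\mathbb{Z}_p^{d-1}$ is a \emph{quotient} of $X(K_\infty)$ on which $H$ acts trivially. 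Note that this is essentially the opposite of your picture: the rank that eventually feeds $\lambda_1$ is not an ``inertia-generated submodule'' (the inertia subgroups map isomorphically onto $H$ and contribute nothing to $X(K_\infty)$); it is the everywhere-unramified quotient $\mathcal{F}$ complementary to the chosen direction $K_\infty$, and it lands in the torsion of each $X(K_m)$ simply because $H$ acts trivially on it, so that $\nu_{m,0}$ acts as $p^m$ --- not because ``ramification becomes visible only in the limit.''

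Moreover, $\lambda_1$ is never isolated by showing that specific classes go into $C_m$; it falls out of a cancellation of $\lambda_2$. Put $\mathcal{C}=\Gal(L(K_\infty)/\overline{L}_S)$. The snake lemma applied to $0\to\mathcal{C}\to X(K_\infty)\to\mathcal{F}\to 0$ with the map $\nu_{m,0}$ gives, since $\mathcal{F}[\nu_{m,0}]=0$ and $\mathcal{F}/\nu_{m,0}\mathcal{F}$ is finite, the bound $\rankzp (X(K_\infty)/\nu_{m,0}X(K_\infty))\leq\rankzp \mathcal{C}=\lambda_1+\lambda_2-d+1$. A second application, to $0\to\mathcal{Y}_0\to X(K_\infty)\to X(K_\infty)/\mathcal{Y}_0\cong X(k_\infty)\to 0$ where $\mathcal{Y}_0$ is the $\Lambda_H$-submodule with $X(K_m)\cong X(K_\infty)/\nu_{m,0}\mathcal{Y}_0$ (available for all $m\geq 0$ since every ramified prime is totally ramified), converts this into $\rankzp X(K_m)\leq\lambda_1+\lambda_2-d+1+\rankzp X(k_\infty)$ for every $m$. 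Taking $m$ large, the left side equals $\lambda_2$ by Proposition A(ii), the two occurrences of $\lambda_2$ cancel, and the inequality $\lambda_1\geq d-1-\rankzp X(k_\infty)$ drops out. Your detour through $X_S(K_\infty)$ and its $\omega_0$-coinvariants is unnecessary, and without the unramifiedness of $\overline{L}_S/K_\infty$ and these two rank computations the plan as written does not yet constitute a proof.
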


\begin{proof}
Recall that $S$ does not contain any prime lying above $p$.
Then, the inertia subgroup for any prime of $S'$ in $\Gal (\overline{L}_S /k_\infty)$ 
is pro-cyclic (recall the argument given in Example \ref{ex_S_ram}).
By the assumption, we see that $\overline{L}_S /K_\infty$ is unramified at every prime.
Hence $\overline{L}_S \subset L (K_\infty)$.

We put 
$\mathcal{C} := \Gal (L(K_\infty)/\overline{L}_S)$ and 
$\mathcal{F} := \Gal (\overline{L}_S /K_\infty)$.
These are finitely generated torsion $\Lambda_H$-modules. 
Since $\overline{L}_S /k_\infty$ is an abelian extension, 
$H$ acts trivially on $\mathcal{F}$.
Let $\nu_{m,0}$ be the element of $\Lambda_H$ defined in Section \ref{sec_propA}.
By the assumption on ramification of primes, there is a 
$\Lambda_H$-submodule $\mathcal{Y}_0$ of $X (K_m)$ which satisfies 
\[ X (K_m) \cong X (K_\infty) / \nu_{m,0} \mathcal{Y}_0 \]
for all $m \geq 0$ (recall the fact stated in Section \ref{sec_propA}).

By applying the snake lemma for the following commutative diagram with exact rows 
\[ \begin{CD}
0 @>>> \mathcal{C} @>>> X (K_\infty) @>>> \mathcal{F} @>>> 0 \\
@.  @VV{\nu_{m,0}}V       @VV{\nu_{m,0}}V    @VV{\nu_{m,0}}V @. \\
0 @>>> \mathcal{C} @>>> X (K_\infty) @>>> \mathcal{F} @>>> 0, 
\end{CD} \]
we obtain the following exact sequence 
\[ \mathcal{F} [\nu_{m,0}] \to 
\mathcal{C}/ \nu_{m,0} \mathcal{C} \to X (K_\infty)/ \nu_{m,0}  X (K_\infty) 
\to \mathcal{F} / \nu_{m,0} \mathcal{F} \to 0 \]
for all $m$.
Here, $\mathcal{F} [\nu_{m,0}]$ is the kernel of the right vertical map.
Note that $\mathcal{F}$ is free of rank $d-1$ as a $\mathbb{Z}_p$-module, 
and $H$ acts trivially on $\mathcal{F}$.
From this, we can see that $\mathcal{F} [\nu_{m,0}]$ is trivial.
We also see that $\mathcal{F} / \nu_{m,0} \mathcal{F}$ is finite.
This implies that 
\[ \rankzp (X (K_\infty)/ \nu_{m,0}  X (K_\infty))
= \rankzp (\mathcal{C}/ \nu_{m,0} \mathcal{C}) \leq \rankzp \mathcal{C}. \]

Since $\rankzp X (K_\infty) = \lambda_1 + \lambda_2$, 
we see that $\rankzp \mathcal{C} = \lambda_1 + \lambda_2 - d +1$, 
and then  
\begin{equation}\label{inequality_X}
\rankzp (X (K_\infty)/ \nu_{m,0}  X (K_\infty)) \leq \lambda_1 + \lambda_2 - d +1.
\end{equation}

Next, we consider the following exact sequence 
\[ 0 \to \mathcal{Y}_0 \to X (K_\infty) \to X (K_\infty)/\mathcal{Y}_0 \to 0. \]
Similar to the previous argument, 
we obtain the following exact sequence
\begin{multline*}
(X (K_\infty)/\mathcal{Y}_0) [\nu_{m,0}] \to 
\mathcal{Y}_0 / \nu_{m,0} \mathcal{Y}_0 \to X (K_\infty)/ \nu_{m,0}  X (K_\infty) \\
\to (X (K_\infty)/\mathcal{Y}_0) / \nu_{m,0} (X (K_\infty)/\mathcal{Y}_0) \to 0. 
\end{multline*}
We note that $X (K_\infty)/\mathcal{Y}_0$ is isomorphic to $X (k_\infty)$.
By the assumption, $X (k_\infty)$ is finitely generated over $\mathbb{Z}_p$.
Since $H$ acts trivially on $X (K_\infty)/\mathcal{Y}_0$, 
we see that both 
$(X (K_\infty)/\mathcal{Y}_0) [\nu_{m,0}]$ and 
$(X (K_\infty)/\mathcal{Y}_0) / \nu_{m,0} (X (K_\infty)/\mathcal{Y}_0)$ 
are finite.
Hence 
\begin{equation}\label{equation_Y}
\rankzp (\mathcal{Y}_0 / \nu_{m,0} \mathcal{Y}_0) = 
\rankzp (X (K_\infty)/ \nu_{m,0}  X (K_\infty)). 
\end{equation}

Finally, by using the exact sequence 
\[ 0 \to \mathcal{Y}_0 / \nu_{m,0} \mathcal{Y}_0 \to 
X (K_\infty) / \nu_{m,0} \mathcal{Y}_0 \to X (K_\infty) / \mathcal{Y}_0 \to 0, \]
\eqref{inequality_X}, and \eqref{equation_Y}, we see that 
\begin{eqnarray*}
\rankzp X (K_m) & = & \rankzp (X (K_\infty) / \nu_{m,0} \mathcal{Y}_0) \\
 & = & \rankzp (\mathcal{Y}_0 / \nu_{m,0} \mathcal{Y}_0) + \rankzp (X (K_\infty) / \mathcal{Y}_0) \\
 & = & \rankzp (X (K_\infty)/ \nu_{m,0}  X (K_\infty)) + \rankzp X (k_\infty) \\
 & \leq &  \lambda_1 + \lambda_2 - d +1 + \rankzp X (k_\infty).
\end{eqnarray*}
If $m$ is sufficiently large, then $\rankzp X (K_m) = \lambda_2$, and hence
\[ \lambda_2 \leq  \lambda_1 + \lambda_2 - d +1 + \rankzp X (k_\infty). \]
The assertion follows from this.
\end{proof}

\subsection{Example under a strict condition}\label{sec_GC}
First at all, we recall Greenberg's conjecture \cite{Gre76} (GC).
This conjecture states that
$\lambda (k'_\infty /k') = \mu (k'_\infty /k') = 0$ if $k'$ is a totally real
algebraic number field.
It is equivalent to that $X (k'_\infty)$ is finite.

In this paragraph, assume that $k$ is totally real 
and $K_\infty /k_\infty$ satisfies (R).
We note that $K_m$ is a cyclotomic $\mathbb{Z}_p$-extension of a certain intermediate field
$k'$ with finite degree over $k$.
This $k'$ is also totally real because no archimedean prime ramifies in $K_\infty /k$.
We can see that the following conditions are equivalent 
(cf. also \cite[Statement 4 of Theorem 1]{MY_tjm}):
\begin{itemize}
\item GC holds for every intermediate field of $K_\infty /k$ with finite degree, 
\item $X (K_m)$ is finite for every $m \geq 0$,
\item $K_\infty / k_\infty$ satisfies (T),  $\lambda_2 = 0$,
and $\lambda_1 = \rankzp X (K_\infty)$.
\end{itemize}
(As an additional remark, the proof of Theorem \ref{main_thm} under these equivalent conditions 
becomes much simpler.)
We will give an explicit example (cf. the case (a) of Example \ref{ex_S_ram}).

\begin{example}\label{ex_GC}
Assume that $p$ is odd, and take an integer $r \geq 3$.
Let $q_1, q_2, \ldots, q_r$ be distinct prime numbers satisfying
$q_i \equiv 1 \pmod{p}$ and  $q_i \not\equiv 1 \pmod{p^2}$
for $i = 1, 2, \ldots r$.
Let $S$ be the set of primes of $\mathbb{B}_\infty$ lying above
$\{q_1, \ldots, q_r \}$.
Then, by \cite[Theorem 1.1]{IMO}, we see that $\rankzp X_S (\mathbb{B}_\infty ) = r-1$.
We can take a $\mathbb{Z}_p$-extension $K_\infty / \mathbb{B}_\infty$ such that all primes of
$S$ are ramified.
Assume that GC holds for all intermediate fields of
$K_\infty / \mathbb{Q}$ with finite degree.
Then $K_\infty / \mathbb{B}_\infty$ satisfies (R) and (T).
Moreover, $\lambda_1 \geq r-2$ by Proposition \ref{lower_lambda_1} 
(recall that $X (\mathbb{B}_\infty)$ is trivial by Iwasawa's theorem \cite{Iwa56}).
\end{example}

\subsection{Proofs of Propositions \ref{prop_Q} and \ref{prop_imag} and examples}
We mention that Propositions \ref{prop_Q} and \ref{prop_imag}
are shown by using the method given in the previous study \cite{I07} on GGC.
We shall introduce the following criterion for the validity of (T).
This is an immediate consequence of the argument given in \cite{Blo}. 

\begin{lemB}[cf. the proof of {\cite[Corollary 1]{Blo}}]
Assume that $K_\infty /k_\infty$ satisfies (R), and
$e_1 =0$
(i.e., all primes which ramify in $K_\infty / k_\infty$ are totally ramified).
If $X (k_\infty)$ is finitely generated over $\mathbb{Z}_p$ and
\[ \rankzp X (K_m) - \rankzp X (k_\infty) < p^m - 1 \] 
with some $m \geq 1$, then $K_\infty /k_\infty$ satisfies (T).
\end{lemB}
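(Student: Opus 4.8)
The plan is to reduce condition (T) to a single $\mathbb{Z}_p$-rank estimate and then use the structure theory of $\Lambda_H$-modules to convert the hypothesis $\rankzp X (K_m) - \rankzp X (k_\infty) < p^m-1$ into the vanishing of the $\Lambda_H$-rank of $X (K_\infty)$. Since $X (k_\infty)$ is finitely generated over $\mathbb{Z}_p$, the fact recalled in Section \ref{introduction} already gives that $X := X (K_\infty)$ is a finitely generated $\Lambda_H$-module, so the only remaining point is to show that $X$ is $\Lambda_H$-torsion, i.e.\ that its $\Lambda_H$-rank $\rho$ is zero.

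First I would invoke the submodule $\mathcal{Y}_0 \subset X$ furnished by the assumption $e_1 = 0$ (see Section \ref{sec_propA}): it satisfies $X (K_m) \cong X / \nu_{m,0} \mathcal{Y}_0$ for all $m \geq 0$, and moreover $X / \mathcal{Y}_0 \cong X (k_\infty)$ (as used in the proof of Proposition \ref{lower_lambda_1}). In particular $X / \mathcal{Y}_0$ is finitely generated over $\mathbb{Z}_p$, hence $\Lambda_H$-torsion, so $\mathcal{Y}_0$ has the same $\Lambda_H$-rank $\rho$ as $X$. The inclusions $\nu_{m,0} \mathcal{Y}_0 \subseteq \mathcal{Y}_0 \subseteq X$ then yield the short exact sequence
\[ 0 \to \mathcal{Y}_0 / \nu_{m,0} \mathcal{Y}_0 \to X / \nu_{m,0} \mathcal{Y}_0 \to X / \mathcal{Y}_0 \to 0, \]
and taking $\mathbb{Z}_p$-ranks gives the clean identity
\[ \rankzp X (K_m) - \rankzp X (k_\infty) = \rankzp (\mathcal{Y}_0 / \nu_{m,0} \mathcal{Y}_0). \]

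The core of the argument is the lower bound $\rankzp (\mathcal{Y}_0 / \nu_{m,0} \mathcal{Y}_0) \geq \rho (p^m - 1)$, valid for any finitely generated $\Lambda_H$-module of rank $\rho$ because $\nu_{m,0}$ is a distinguished polynomial of degree $p^m-1$ and hence $\Lambda_H / \nu_{m,0} \Lambda_H \cong \mathbb{Z}_p^{p^m-1}$. To prove it I would fix an injection $\Lambda_H^\rho \hookrightarrow \mathcal{Y}_0$ with $\Lambda_H$-torsion cokernel $Q$ and apply the snake lemma to multiplication by $\nu_{m,0}$ on $0 \to \Lambda_H^\rho \to \mathcal{Y}_0 \to Q \to 0$. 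Since $\Lambda_H^\rho$ is $\nu_{m,0}$-torsion-free, the resulting exact sequence reads
\[ 0 \to \mathcal{Y}_0[\nu_{m,0}] \to Q[\nu_{m,0}] \to \Lambda_H^\rho / \nu_{m,0} \Lambda_H^\rho \to \mathcal{Y}_0 / \nu_{m,0} \mathcal{Y}_0 \to Q / \nu_{m,0} Q \to 0, \]
and comparing alternating $\mathbb{Z}_p$-ranks gives $\rankzp (\mathcal{Y}_0 / \nu_{m,0} \mathcal{Y}_0) = \rho (p^m-1) + \rankzp \mathcal{Y}_0[\nu_{m,0}]$, once one knows $\rankzp Q[\nu_{m,0}] = \rankzp (Q / \nu_{m,0} Q)$. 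This last equality is rank--nullity applied summand-by-summand to the elementary decomposition of the torsion module $Q$ (the $\Lambda_H/(f^b)$-summands are $\mathbb{Z}_p$-free of finite rank, and the $\Lambda_H/(p^a)$-summands contribute $0$), so the free part survives and $\rankzp (\mathcal{Y}_0 / \nu_{m,0} \mathcal{Y}_0) \geq \rho (p^m-1)$.

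Combining the two displays, the hypothesis for some $m \geq 1$ gives $\rho (p^m - 1) \leq \rankzp X (K_m) - \rankzp X (k_\infty) < p^m - 1$; since $p^m - 1 > 0$, this forces $\rho < 1$, hence $\rho = 0$, so $X (K_\infty)$ is $\Lambda_H$-torsion and (T) holds. The main obstacle is the rank inequality $\rankzp (\mathcal{Y}_0 / \nu_{m,0} \mathcal{Y}_0) \geq \rho (p^m-1)$: one must check carefully that the torsion part of $\mathcal{Y}_0$ — in particular any summand whose characteristic power shares a factor with $\nu_{m,0}$ — can only raise the rank of the quotient and never erode the free contribution $\rho(p^m-1)$, which is exactly what the cancellation $\rankzp Q[\nu_{m,0}] = \rankzp (Q / \nu_{m,0} Q)$ secures.
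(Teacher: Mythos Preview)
Your proposal is correct and follows essentially the same line as the paper's proof: both reduce to the identity $\rankzp X(K_m) - \rankzp X(k_\infty) = \rankzp(\mathcal{Y}_0/\nu_{m,0}\mathcal{Y}_0)$ and then bound the right-hand side below by $r(p^m-1)$, where $r$ is the $\Lambda_H$-rank of $\mathcal{Y}_0$ (equivalently of $X(K_\infty)$). The only technical variation is that the paper obtains this bound via a pseudo-isomorphism $\mathcal{Y}_0 \to \Lambda_H^r \oplus E$ and an appeal to Bloom, whereas you use an injection $\Lambda_H^\rho \hookrightarrow \mathcal{Y}_0$ together with the snake lemma and the cancellation $\rankzp Q[\nu_{m,0}] = \rankzp(Q/\nu_{m,0}Q)$; these are interchangeable standard devices. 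One small wording issue: $Q$ is only pseudo-isomorphic to an elementary module, not literally a direct sum of $\Lambda_H/(p^a)$ and $\Lambda_H/(f^b)$, so ``summand-by-summand'' is slightly loose---but the claim is salvaged immediately by noting that $Q \otimes_{\mathbb{Z}_p} \mathbb{Q}_p$ is a finite-dimensional $\mathbb{Q}_p$-vector space (finite kernels and cokernels vanish after tensoring), on which rank--nullity for the endomorphism $\nu_{m,0}$ gives the desired equality.
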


\begin{proof}
We recall that 
$X (K_\infty)$ is a finitely generated $\Lambda_H$-module under the assumption of this proposition.
Let $\mathcal{Y}_0$ be the submodule of $X (K_m)$ stated in Section \ref{sec_propA}.
Since $\mathcal{Y}_0$ is also a finitely generated $\Lambda_H$-module,
there is a pseudo-isomorphism 
\[ \mathcal{Y}_0 \to \Lambda_H^{r} \oplus E, \]
where $r$ is a non-negative integer and $E$ is an elementary torsion 
$\Lambda_H$-module (see, e.g., \cite[p.351]{Was}).
By using the argument given in the proof of \cite[Corollary 1]{Blo}, 
we see that 
\[ \rankzp (\mathcal{Y}_0 / \nu_{m,0} \mathcal{Y}_0) 
= r (p^m -1) + C_m \]
for all $m$.
Here, $C_m$ is a non-negative integer which satisfies 
$C_{m+1} \geq C_m$ for every $m$.
We also note that 
\begin{eqnarray*}
\rankzp X (K_m) - \rankzp X (k_\infty) 
 & = & \rankzp (X (K_\infty) / \nu_{m,0} \mathcal{Y}_0) - 
\rankzp (X (K_\infty) / \mathcal{Y}_0) \\
 & = & \rankzp (\mathcal{Y}_0 / \nu_{m,0} \mathcal{Y}_0). 
\end{eqnarray*}
If $X (K_\infty)$ is not $\Lambda_H$-torsion, then 
$r \geq 1$, and moreover 
\[ \rankzp X (K_m) - \rankzp X (k_\infty) =
r (p^m -1) + C_m \geq p^m -1. \]
This yields the assertion of this lemma.
\end{proof}

In particular, if $\rankzp X (K_1) = \rankzp X (k_\infty)$, then (T) is sarisfied. 
This is an analog of \cite[Lemma 1]{I07}, however, the above lemma is 
more useful in general.

In the rest of this subsection, we fix $p=2$.

\begin{proof}[Proof of Proposition \ref{prop_Q}]
Let $r$ be the number of primes lying above $q_1$
(which is equal to that of $q_2$).
By the assumption, $r \geq 2$.
Using \cite[Theorem 1.1]{IMO} and \cite[Theorem 1.1]{I18}, we see that
$X_S (\mathbb{B}_\infty)$ is a free $\mathbb{Z}_2$-module of rank $r$.
Hence, every quadratic subextension of $L_S (\mathbb{B}_\infty) / \mathbb{B}_\infty$ 
is extendable to a $\mathbb{Z}_2$-extension contained in $L_S (\mathbb{B}_\infty)$.
Since $\mathbb{B}_\infty (\sqrt{q_1 q_2})$ is contained in $L_S (\mathbb{B}_\infty)$,
we obtain (i).

We shall show (ii).
Since $K_1 = \mathbb{B}_\infty (\sqrt{q_1 q_2})$, all primes in $S$ is totally ramified
in $K_\infty / \mathbb{B}_\infty$.
By the assumption, we see that $X (K_1)$ is finite.
Hence $K_\infty / \mathbb{B}_\infty$ satisfies (T) by Lemma B.
Finally, by using Proposition \ref{lower_lambda_1}, we see that 
$\lambda_1 \geq r-1 > 0$.
\end{proof}

\begin{example}\label{ex_Q}
Let the notation be as in Proposition \ref{prop_Q}.
Thanks to the computational result given by Pagani \cite{Pagani}, 
we know that the Iwasawa $\lambda$-invariant of the cyclotomic $\mathbb{Z}_2$-extension of
$\mathbb{Q} (\sqrt{q_1 q_2})$ is zero if $q_1 q_2 < 10000$. 
We will give an explicit example.
Put $q_1= 31$ and $q_2 = 223$, and
let $K_\infty / \mathbb{B}_\infty$ be a $\mathbb{Z}_2$-extension stated in (i) of
Proposition \ref{prop_Q}.
Then $K_\infty / \mathbb{B}_\infty$ satisfies (R) and (T).
We also see that
$\rankztwo X_S (\mathbb{B}_\infty ) = 8$ by \cite[Theorem 1.1]{IMO}.
Hence, in this case, we see that $\lambda_1 \geq 7$.
\end{example}

\begin{proof}[Proof of Proposition \ref{prop_imag}]
For (i), it is sufficient to show that $X_S (K_\infty)$ is a
finitely generated free $\mathbb{Z}_2$-module with positive rank.
The remaining part can be shown similarly as the proof of Preposition \ref{prop_Q} (i).

In the following, we use the arguments given in several previous
studies on ``tamely ramified Iwasawa modules''
(see, e.g., \cite{IMO}).
We remark that $X_S (k_\infty)$ is known to be finitely generated as a $\mathbb{Z}_2$-module,
hence we shall show it is free.
By the assumption on $q$, there are exactly two primes
$\mathfrak{q}_n$, $\mathfrak{q}'_n$of $k_n$ lying above $q$ if $n \geq 1$.
We denote by $O_{k_n}$ (resp. $O_{\mathbb{B}_n}$) the ring of integers of $k_n$
(resp. $\mathbb{B}_n$).
We also denote by $E_{k_n}$ (resp. $E_{\mathbb{B}_n}$) the group of units of $k_n$
(resp. $\mathbb{B}_n$).
In the following, we assume that $n \geq 1$.
By class field theory, we obtain the exact sequence
\[ E_{k_n} \otimes_{\mathbb{Z}} \mathbb{Z}_2 \to
(O_{k_n} / (q))^\times \otimes_{\mathbb{Z}} \mathbb{Z}_2 \to
X_S (k_n) \to X (k_n) \to 0, \]
where $X_S (k_n)$ is the Galois group of the maximal abelian $p$-extension
over $k_n$ unramified outside $\{ \mathfrak{q}_n$, $\mathfrak{q}'_n \}$.
We  also obtain the exact sequence
\[ E_{\mathbb{B}_n} \otimes_{\mathbb{Z}} \mathbb{Z}_2 \to
(O_{\mathbb{B}_n} / (q))^\times \otimes_{\mathbb{Z}} \mathbb{Z}_2 \to
X_S (\mathbb{B}_n) \to 0, \]
where $X_S (\mathbb{B}_n)$ is defined similarly as above.
Since $q \equiv 3 \pmod{8}$, we see that $X_S (\mathbb{B}_n)$ is 
trivial for all $n$ (e.g., \cite[Proposition 2.5]{Salle}).
Hence, the map $E_{\mathbb{B}_n} \otimes_{\mathbb{Z}} \mathbb{Z}_2 \to
(O_{\mathbb{B}_n} / (q))^\times \otimes_{\mathbb{Z}} \mathbb{Z}_2$ is surjective.
We note that
\[ (O_{k_n} / (q))^\times \otimes_{\mathbb{Z}} \mathbb{Z}_2
\cong \mathbb{Z} / 2^a \mathbb{Z} \oplus \mathbb{Z} / 2^a \mathbb{Z}, \quad
(O_{\mathbb{B}_n} / (q))^\times \otimes_{\mathbb{Z}} \mathbb{Z}_2
\cong \mathbb{Z} / 2^a \mathbb{Z} \]
as abelian groups with some positive integer $a$.
By combining these facts, we see that the cokernel of
\[ E_{k_n} \otimes_{\mathbb{Z}} \mathbb{Z}_2 \to
(O_{k_n} / (q))^\times \otimes_{\mathbb{Z}} \mathbb{Z}_2 \]
is a cyclic group.
Let $\lambda_0$ be the $\mathbb{Z}_2$-rank of $X (k_\infty)$.
By the assumption, the $2$-rank of $X (k_\infty)$ is also $\lambda_0$.
(For a finitely generated $\mathbb{Z}_2$-module $A$, we denote by its
$2$-rank the $\mathbb{F}_2$-dimension of $A /2 A$.)
We note that the $2$-rank of $X (k_n)$ is equal to $\lambda_0$
if $n$ is sufficiently large.
Hence the $2$-rank of $X_S (k_n)$ is at most $\lambda_0 + 1$.
We then see that the $2$-rank of $X_S (k_\infty)$ is at most $\lambda_0 + 1$.
On the other hand, by using \cite[Theorem 1.4]{IMO} (see also \cite[Theorem 2.1]{Salle}),
we see that 
$\rankztwo X_S (k_\infty)$ is just equal to $\lambda_0 + 1$.
Hence $X_S (k_\infty)$ must be a finitely generated free $\mathbb{Z}_2$-module of
rank $\lambda_0 + 1$.

We shall show (ii).
Recall that $\lambda_0$ is the $\mathbb{Z}_2$-rank of $\rankztwo X (k_\infty)$.
We claim that $\rankztwo X (K_1) = \lambda_0$.
From this claim and Lemma B,
we can see that $K_\infty / k_\infty$ satisfies (T).

To show the above claim,
we recall the following known equality
\[ \rankztwo X (K_1) = \rankztwo X (k_\infty) +
\rankztwo X (\mathbb{B}_\infty (\sqrt{-q})) + \rankztwo X (\mathbb{B}_\infty (\sqrt{mq})). \]
(cf., e.g., \cite[Proof of Proposition 1]{I07}).
It is also known that $\rankztwo X (\mathbb{B}_\infty (\sqrt{-q})) = 0$
(\cite{Ferrero}, \cite{Kida}).
By our assumption on the $\lambda$-invariant, we see that $\rankztwo X (\mathbb{B}_\infty (\sqrt{mq})) =0$.
Hence the claim follows.

Since we assumed that $X (k_\infty)$ is not finite, we also see that
$\lambda_2 > 0$.
\end{proof}

\begin{rem}\label{rem_F-K}
As already used in the above proof,
we can compute the $\lambda$-invariant of the cyclotomic $\mathbb{Z}_2$-extension
of an explicitly given imaginary quadratic field $k$ by using the formula of Ferrero and Kida
(\cite{Ferrero}, \cite{Kida}).
We also note that if $2$ is unramified in $k$, then $X (k_\infty)$ is $\mathbb{Z}_2$-torsion free
(see, e.g., \cite[Theorem 1]{Kida}).
\end{rem}

\begin{example}\label{ex_imag}
Let the notation be as in Proposition \ref{prop_imag}.
As similar to the case of Example \ref{ex_Q}, it is known that 
the Iwasawa $\lambda$-invariant of the cyclotomic $\mathbb{Z}_2$-extension 
of $\mathbb{Q} (\sqrt{mq})$ is zero if $mq < 10000$ (Pagani \cite{Pagani}).
As an explicit example, we shall treat the case that $m=1463 = 7 \cdot 11 \cdot 19$ and $q=3$.
In this case, 
we can show that $X (k_\infty) \cong \mathbb{Z}_2^3$ as a $\mathbb{Z}_2$-module 
(see the above remark).
Moreover, since $mq= 4389$, we see that $X (\mathbb{B}_\infty (\sqrt{mq}))$ is finite.
Let $K_\infty / k_\infty$ be a $\mathbb{Z}_2$-extension stated in Proposition \ref{prop_imag} (i).
Then $K_\infty / k_\infty$ satisfies (R) and (T), and moreover $\lambda_2 \geq 3$.
\end{example}

\begin{example}\label{ex_imag_f}
Let the notation be as in Proposition \ref{prop_imag}.
We put $m= \ell_a \ell_b$, where $\ell_a, \ell_b$ are distinct prime numbers 
satisfying $\ell_a \equiv 7 \pmod{8}$ and $\ell_b \equiv 5 \pmod{8}$. 
Then, as a $\mathbb{Z}_2$-module, $X (k_\infty)$ is isomorphic to $\mathbb{Z}_2^c$  
with some positive integer $c$ (Remark \ref{rem_F-K}).
In this case, 
we see that $X (\mathbb{B}_\infty (\sqrt{mq}))$ is non-trivial and finite 
by the result of Mouhib-Movahhedi \cite[Theorem 4.4]{MM}.
Take a $\mathbb{Z}_2$-extension $K_\infty / k_\infty$ stated in Proposition \ref{prop_imag} (i).
Then $K_\infty / k_\infty$ satisfies (R) and (T), and 
$\lambda_2 \geq c$.
\end{example}

\subsection{Proof of Proposition \ref{prop_imag2}}
We also fix $p=2$ in this subsection.
To show Proposition \ref{prop_imag2}, 
we will use \cite[Theorem 2.1]{MY_arch} instead of Lemma B.

\begin{proof}[Proof of Proposition \ref{prop_imag2}]
By using the facts mentioned in Remark \ref{rem_F-K},
we see that $X (k_\infty)$ is free of rank $r-1$ as a $\mathbb{Z}_2$-module.
We also note that $X (\mathbb{B}_\infty (\sqrt{\ell q}))$ is trivial 
by Iwasawa's theorem (\cite{Iwa56}, see also \cite[pp.438--439]{OT}).
Hence we see that 
$\rankztwo X (K_1) = \rankztwo X (k_\infty) = r-1$ 
by using the argument given in the proof of Proposition \ref{prop_imag}.

Moreover, we claim that $X (K_1)$ is $\mathbb{Z}_2$-torsion free.
We shall show this claim.
In the following, we denote $n$ by an arbitrary non-negative integer.
Note that $k_n (\sqrt{-q})$ is the $n$th layer of the 
cyclotomic $\mathbb{Z}_2$-extension $K_1 / k (\sqrt{-q})$.
Moreover, $k_n (\sqrt{-q})$ is a CM-field and $\mathbb{B}_n (\sqrt{\ell q})$ is its 
maximal real subfield.
Let $J$ be the generator of $\Gal (K_1 / \mathbb{B}_\infty (\sqrt{\ell q}))$.
Then, thanks to the result given by Atsuta \cite[Corollary 1.4]{Atsuta},
we can see that $X (K_1)/(1 +J) X (K_1)$ has no non-trivial $\mathbb{Z}_2$-torsion 
element.
We also see that $(1+J) X (k_n (\sqrt{-q}))$ is trivial since 
$X (\mathbb{B}_n (\sqrt{\ell q}))$ is trivial 
(this can be shown by considering the ideal class groups).
Hence $(1 +J) X (K_1)$ is trivial (cf. the argument given in the proof of 
\cite[Lemma 2.5]{Atsuta}).
The claim follows.

From this claim, we see that $X (K_1)$ is isomorphic to $X (k_\infty)$ via the 
restriction map.
Hence, by using \cite[Theorem 2.1]{MY_arch}, we see that 
$X (K_m) \cong X (k_\infty)$ for all $m \geq 2$, and then 
\[ X (K_\infty) \cong X (k_\infty) \cong \mathbb{Z}_2^{r-1} \]
(cf. the proof of \cite[Theorem 4.4]{MY_arch}).
The assertions of the proposition follows from this.
\end{proof}

\begin{rem}
In the above proof, we showed that $X (\mathbb{B}_\infty (\sqrt{-\ell}, \sqrt{-q})) 
\cong \mathbb{Z}_2^{r-1}$ as a $\mathbb{Z}_2$-module.
Similar results also can be found in \cite[Theorem 4.9]{MR}.
\end{rem}

\begin{acknowledgment}
The author express his gratitude to Takashi Fukuda. 
His talk and comment gave inspiration to start this study.
\end{acknowledgment}

\bigskip

\begin{flushleft}
Tsuyoshi Itoh \\
Division of Mathematics, 
Education Center,
Faculty of Innovative Management Science, \\
Chiba Institute of Technology, \\
2--1--1 Shibazono, Narashino, Chiba, 275--0023, Japan \\
e-mail : \texttt{tsuyoshi.itoh@it-chiba.ac.jp}

\end{flushleft}

\end{document}